\newtheorem{thm}{Theorem}[section]
\newtheorem{prop}[thm]{Proposition}
\newtheorem{defi}[thm]{Definition}
\newtheorem{example}[thm]{Example}
\newtheorem{notation}[thm]{Notation}
\newtheorem{note}[thm]{Note}
\numberwithin{equation}{thm} 
\newenvironment{proof}[1][Proof]{\begin{trivlist}
\item[\hskip \labelsep {\bfseries #1}]}{\end{trivlist}}
\newcommand{\qed}{\nobreak \ifvmode \relax \else
      \ifdim\lastskip<1.5em \hskip-\lastskip
      \hskip1.5em plus0em minus0.5em \fi \nobreak
      \vrule height0.75em width0.5em depth0.25em\fi}
\begin{document}


\title{Randomly Perturbed Ergodic Averages \\ {\small JaeYong Choi and   Karin Reinhold\\    
Department of Mathematics and Statistics\\
University at Albany, SUNY \\ 
}}
\maketitle







\begin{abstract}   Convergence properties of random ergodic averages have been extensively studied in the literature. In these notes, we exploit a uniform estimate by Cohen \& Cuny who showed convergence of a series along randomly perturbed times for functions in $L^2$ with $\int \max(1,\log (1+|t|)) d\mu_f<\infty$.  We prove universal pointwise convergence of a class of random averages along randomly perturbed times for $L^2$ functions with  $\int \max(1,\log\log(1+|t|)) d\mu_f<\infty$. For averages with additional smoothing properties, we obtain a universal variational inequality as well as universal pointwise convergence of a series define by them for all functions in $L^2$. 
\end{abstract}

\section{Introduction.}

Ergodic averages along sequences that are well behaved, in terms of pointwise convergence, loose the convergence properties if the sequence is slightly perturbed. To be more specific, a sequence of operators $T_n$ acting on a probability space has the {\it strong sweeping out  property} if, for any $\epsilon>0$, there exists a set $E$ with $0<m(E)<\epsilon$, such that $\limsup_{n\to\infty} T_n 1_E =1 $ a.e. and $\liminf_{n\to\infty} T_n 1_E =0 $ a.e..  Let $(X,{\cal D}, m)$ be a non--atomic probability space and 
$\{\tau_t\}_{t\in \mathbb R}$ an aperiodic, ergodic, measure preserving flow on it. 
Ackoglu, Bellow, del Junco and Jones \cite{ABDJ} showed that for any increasing sequence of integers $\{n_k\}$, if $\{t_k\}$ is a sequence such that $t_k\mapsto 0$, the averages
\begin{equation*}
 B_nf(x) = \frac{1}{n} \sum_{k=1}^n   f( \tau_{n_k+t_k} x)   
\end{equation*}
have the strong sweeping out property and therefore, there exist bounded functions for which pointwise convergence fails on sets of positive measure. 
Earlier
Bergelson, Boshernitzan and Bourgain  \cite{BBB} had used Bourgain's entropy method \cite{B} to show that the averages $B_nf$ diverge a.e. for some $f\in L^{\infty}$ when the sequence $\{t_k\}$ is independent over the rationals.
Ackoglu, del Junco and Lee \cite{ADL} proved that the related averages 
\begin{equation*}  
C_nf(x)=\frac 1n \sum_{k=1}^n   f( \{x+t_k\} ) \, dt 
\end{equation*}    
 for $f$ in $[0,1)$, have the $\delta$-sweeping out property.
 In \cite{ABJLRW}, Ackoglu et all showed that these averages have in fact the strong sweeping out property.

In these notes we consider the behavior of averages such as $B_nf$ and $C_nf$ 
when the perturbations $\{t_k\}$ are random rather than deterministic. 
For example, let $\{n_k\}$ be a non-decreasing sequence of integers, 
$\{\delta_k\}$ and $\{\epsilon_k\}$ independent sequences of i.i.d. random variables 
defined on a probability space $(\Omega,{\cal F},{\bf P})$. 
Given a semi--flow of positive contraction operators 
$\{T_t\}_{t\in ({\mathbb R}^+)}\subset ({\mathbb R}^+)$, 
we may consider the following averages:
\begin{equation} F_nf(\omega,x)=\frac 1n \sum_{k=1}^n \frac 1{2|\epsilon_k(\omega)|} 
\int_{|t|<|\epsilon_k(\omega)|} T_{n_k+\delta_k(\omega)+t}f(x) \, dt,  \label{eq:intro} 
\end{equation} 
\begin{equation*}  
G_nf(\omega,x)=\frac 1n \sum_{k=1}^n   T_{n_k+\delta_k(\omega)}f(x)  
\quad \mbox{ and } \quad 
H_nf(\omega,x)=\frac 1n \sum_{k=1}^n   T_{\delta_k(\omega)}f(x). \label{Hn} 
 \end{equation*}

``Universal'' convergence results concerns finding a subset $\Omega'\subset \Omega$ with 
$P(\Omega')=1$ such that, for every $\omega\in\Omega'$, the averages in consideration converge 
almost everywhere for all functions in a certain class.
Reinhold \cite{KR} considered averages $F_nf$ for the sequence $n_k=k$ and obtained universal
convergence results for $f\in L^p$, $p\ge 1$, when $e_k^{-1}\in L^q$, $\frac 1p +\frac 1q$. 
Schneider \cite{Sch1} considered averages of the form $G_nf$ for the sequence $n_k=k^2$  
and the sequences $\delta_k$ taking values $1$ and $-1$ with probability 1/2.  
He established universal a.e. converge for $f\in L^2$. Schneider in \cite{Sch2} 
then extended this result for sequences with the growth condition $n_k=O(2^{k^s})$ 
for some $s\in (0,1)$, and integer--valued sequences of random variables $\delta_k$. 
Cohen and Cuny \cite{Cohen.Cuny2} considered the averages $G_nf$ and $H_nf$ for sequences 
$\{\delta_k\}$ that do not take integer values and $f\in L^2$. 
In light of \cite{BBB,ABJLRW}, pointwise convergence of $G_nf$ fails for some functions in $L^2$ 
but positive results are obtained by considering a subclass. 
Cohen and Cuny \cite{Cohen.Cuny2} proved universal pointwise convergence of averages 
$G_nf$ and $H_nf$ for $f\in L^2$  such that $\int \log(2+|t|) d\mu_f(t)<\infty$. 

\begin{thm} [\cite{Cohen.Cuny2}, Theorem 4.12] Let $\{X_n\} \subset ({\mathbb R}^+)^d$ be 
i.i.d random variables with $E(|X_1|^{\alpha})<\infty$ for some $\alpha>0$. 
Let $\{n_k\} \subset ({\mathbb R}^+)^d$, with $|n_m|^*=max_{k\le m} |n_k|= O(2^{m^\beta})$, 
for some $0<\beta<1$. Then, there exists a set $\Omega'\subset \Omega$ with $P(\Omega')=1$, 
such that for every $\omega\in \Omega'$, for any probability space $(X,{\cal D},m)$, 
any continuous semi-flow of isometries $\{T_t\}_{t\in ({\mathbb R}^+)^d}$, 
and any $f\in L^2(X)$ with $\int_{{\mathbb R}^d} \log(2+|t|) d\mu_f<\infty$, the series 
\[\sum_{k=1}^{\infty} \frac{T_{n_k+X_k(\omega)}f -T_{n_k}E(T_{X_k})f}{k}\]
converges almost everywhere. In particular 
\[ \frac 1n \sum_{k=1}^{n} (T_{n_k+X_k(\omega)}f -T_{n_k}E(T_{X_k})f)\]
converges to $0$ almost everywhere.
\end{thm}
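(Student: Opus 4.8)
The plan is to pass to a unitary dilation, rewrite the series as a spectral integral of a random trigonometric multiplier against $\mu_f$, and then promote a pointwise bound on that multiplier to a \emph{universal} maximal inequality. Since $\{T_t\}_{t\in(\mathbb R^+)^d}$ is a commuting strongly continuous semigroup of isometries, it extends to a strongly continuous unitary $\mathbb R^d$-group $\{U_t\}$ on some $\mathcal K\supseteq L^2(X)$ with $U_tf=T_tf$ for $t\ge 0$; by the SNAG theorem $\langle U_af,U_bf\rangle=\int_{\mathbb R^d}e^{i\langle a-b,\lambda\rangle}\,d\mu_f(\lambda)$ and $\mu_f(\mathbb R^d)=\|f\|_2^2$. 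Writing $\phi(\lambda):=E\,e^{i\langle X_1,\lambda\rangle}$, so that $E(T_{X_k})=\int\phi\,dE$, the $N$-th partial sum is
\[
S_N^\omega f:=\sum_{k=1}^N\frac1k\bigl(T_{n_k+X_k(\omega)}f-T_{n_k}E(T_{X_k})f\bigr)=\int_{\mathbb R^d}W_{1,N}(\omega,\lambda)\,dE(\lambda)\,f,
\]
where $W_{N,M}(\omega,\lambda):=\sum_{k=N}^M\frac1k\,e^{i\langle n_k,\lambda\rangle}\bigl(e^{i\langle X_k(\omega),\lambda\rangle}-\phi(\lambda)\bigr)$, so that $\|S_M^\omega f-S_N^\omega f\|_2^2=\int|W_{N+1,M}(\omega,\lambda)|^2\,d\mu_f$. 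Everything will be estimated through these multipliers tested against $\mu_f$, and the role of the two hypotheses is to control them uniformly in $\lambda$ after removing a single $\omega$-null set.

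The heart of the matter, which I expect to be the main obstacle, is the \textbf{uniform estimate}: there is $\Omega'$ with $P(\Omega')=1$ such that for each $\omega\in\Omega'$ there is $C_\omega<\infty$ with, for all $\lambda$ and all dyadic blocks $B_\ell=[2^\ell,2^{\ell+1})$,
\[
\sup_{N\le M}|W_{N,M}(\omega,\lambda)|^2\le C_\omega\max\!\bigl(1,\log(2+|\lambda|)\bigr),\qquad\sup_{N,M\in B_\ell}|W_{N,M}(\omega,\lambda)|^2\le C_\omega\,2^{-\ell}\bigl(\kappa_\ell+\log(2+|\lambda|)\bigr),
\]
where $\kappa_\ell:=\ell+\log_2|n_{2^{\ell+1}}|^{*}=O(2^{\beta\ell})=o(2^\ell)$ by the growth hypothesis. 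I would prove this by breaking the sum over $k$ into the blocks $B_\ell$. For fixed $\lambda$, $W_{N,M}(\omega,\lambda)$ with $N,M\in B_\ell$ is a sum of independent, mean-zero terms of size $\le 2/k\asymp 2^{-\ell}$ with total variance $\lesssim 2^{-\ell}$, so a Bernstein/Lévy maximal estimate gives $P\bigl(\sup_{N,M\in B_\ell}|W_{N,M}(\omega,\lambda)|>t\bigr)\le Ce^{-ct^2 2^\ell}$ for small $t$. One then chains over a $\delta$-net of each annulus $\{|\lambda|\sim 2^j\}$: on $B_\ell$ the multiplier is Hölder of exponent $\min(1,\alpha)$ with constant $\lesssim_\omega\max_{k\le 2^{\ell+1}}(|n_k|+|X_k(\omega)|)+O(1)\lesssim_\omega 2^{O(\kappa_\ell)}$, using $|n_m|^*=O(2^{m^\beta})$ and a Borel--Cantelli argument from $E|X_1|^\alpha<\infty$ (whence $\max_{k\le m}|X_k(\omega)|\lesssim_\omega m^{2/\alpha}$ eventually). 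Choosing $\delta$ so the interpolation error is $\ll 2^{-\ell/2}$ gives a net of $\lesssim\exp(Cd(j+\kappa_\ell))$ points; the union bound then needs $ct^2 2^\ell\gtrsim d(j+\kappa_\ell)$, i.e.\ $t\asymp\sqrt{(j+\kappa_\ell)2^{-\ell}}$, and because $\beta<1$ the probabilities are summable over $(j,\ell)$ while $\sum_\ell\sqrt{(j+\kappa_\ell)2^{-\ell}}\lesssim\sqrt j+\sum_\ell\sqrt{\kappa_\ell 2^{-\ell}}\lesssim\sqrt j+O(1)$. Summing the blocks and invoking Borel--Cantelli yields the displayed bounds. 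This is precisely where both hypotheses are used: the delicate balance is the Bernstein exponent $2^\ell$ against the metric entropy $\kappa_\ell$ of the frequency net (forcing $\beta<1$), arranged so that the per-block thresholds are $\ell$-summable yet grow only like $\sqrt{\log(2+|\lambda|)}$, matching $\int\log(2+|\lambda|)\,d\mu_f<\infty$.

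Granting the uniform estimate, the rest is routine. First, for $\omega\in\Omega'$ and $f$ in the stated class, $|W_{N+1,M}(\omega,\lambda)|^2$ is dominated by the $\mu_f$-integrable $C_\omega\max(1,\log(2+|\lambda|))$ and tends to $0$ for each $\lambda$ (the random series converges a.s.\ for fixed $\lambda$), so $S_N^\omega f$ converges in $L^2(X)$, say to $\mathcal S^\omega f$. Next, I would prove the maximal inequality $\bigl\|\sup_N|S_N^\omega g|\bigr\|_{L^2(X)}\le C_\omega\bigl(\int\max(1,\log(2+|\lambda|))\,d\mu_g\bigr)^{1/2}$: writing $S_N^\omega=S^\omega_{2^{\ell(N)}}+(S_N^\omega-S^\omega_{2^{\ell(N)}})$ and using that $[2^\ell,N]$ is a union of at most $\ell$ dyadic intervals, one bounds $\sup_N|S_N^\omega g|$ by sums over the dyadic pieces $M^\omega_I g=\int W_I(\omega,\cdot)\,dE\,g$; the second displayed bound gives $\sum_{|I|=2^r,\,I\subseteq B_\ell}\|M^\omega_I g\|_2^2\lesssim_\omega 2^{-\ell}\bigl(\int\log(2+|\lambda|)\,d\mu_g+\kappa_\ell\|g\|_2^2\bigr)$ independently of $r$, so the in-block maximal function is $\lesssim_\omega \ell\,2^{-\ell/2}(\cdots)^{1/2}$ and, since $2^{-\ell}\kappa_\ell\lesssim 2^{-(1-\beta)\ell}$, the relevant $\ell$-series converge. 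For $f$ with $\mu_f$ compactly supported, $S_N^\omega f\to\mathcal S^\omega f$ a.e.\ on $X$ for every $\omega\in\Omega'$, directly from the uniform estimate (dyadic-subsequence summability plus the in-block oscillation bound). Finally, for general $f$ in the class and $f_R:=E(\{|\lambda|\le R\})f$, one has $\|f-f_R\|_{\mathcal L}\to0$ in the norm $\|h\|_{\mathcal L}^2:=\int\max(1,\log(2+|\lambda|))\,d\mu_h$, so
\[
\bigl\|\limsup_N|S_N^\omega f-\mathcal S^\omega f|\bigr\|_{L^2(X)}\le\bigl\|\sup_N|S_N^\omega(f-f_R)|\bigr\|_{L^2(X)}+\|\mathcal S^\omega(f-f_R)\|_2\le 2C_\omega\|f-f_R\|_{\mathcal L}\to0,
\]
whence the series converges a.e.\ on $X$; the ``in particular'' assertion then follows by applying Kronecker's lemma pointwise.
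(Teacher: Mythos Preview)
The paper does not prove this statement: it is quoted as Theorem~4.12 of Cohen--Cuny~\cite{Cohen.Cuny2} and serves only as background motivation for the paper's own results (Theorem~\ref{thm:1} and Proposition~\ref{thm:2}). There is therefore no ``paper's own proof'' to compare your proposal against.

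That said, your outline is essentially the Cohen--Cuny argument the paper is citing, and it is also the template the paper follows for the results it \emph{does} prove. The reduction via unitary dilation and spectral measure is exactly the paper's Note~\ref{representation}; your ``uniform estimate'' is the content the paper imports as Theorem~\ref{thm:Main} (Cohen) and repackages in Proposition~\ref{prop:Main}, obtained there by the same Bernstein-plus-chaining mechanism you sketch. The one structural difference is in how the maximal inequality for the partial sums is derived: you do it by hand via dyadic blocking of $[1,N]$ and summing the per-block $v(2)$ bounds, whereas the paper (for the analogous series in Proposition~\ref{thm:2}) feeds the two-parameter estimate $\|S_M-S_N\|_2^2\lesssim A_M\sum_{n<k\le m}\alpha_k$ into the M\'oricz/Cohen--Cuny maximal lemma (Proposition~\ref{prop:Mor}). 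Both routes work; the M\'oricz route is cleaner because it absorbs the dyadic bookkeeping once and for all, while yours makes the dependence on $\beta<1$ (through the summability of $2^{-(1-\beta)\ell/2}$) more visible. Your chaining step is slightly under-specified when $\alpha<1$ (the H\"older regularity of $\phi$ and the truncation of the $X_k$'s deserve a line each), but the outline is correct.
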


Cohen and Cuny's approach aimed at proving the convergence of the above series which yielded pointwise convergence as a consequence.
In section 3, we exploit the uniform estimates they obtained for almost periodic polynomials to prove a variational inequality. This approach allowed us to obtain universal pointwise convergence of the above averages under the weaker condition $\int \max(1,\log  \log(|t|)) d\mu_f(t)<\infty$. 

\begin{thm} \label{thm:1} Let $\{\delta_n\}_{n\in \mathbb N} \subset ({\mathbb R}^+)^d$ be independent random vectors with $\sum_{k\ge 1}  P(|\delta_k|>2^{k^{\beta}})<\infty$.  Let $\{n_k\} \subset ({\mathbb R}^+)^d$, with $ |n_k|= O(2^{k^{\beta}})$, for some $0<\beta<1$. Then, there exists a set $\Omega'\subset \Omega$ with $P(\Omega')=1$, such that for every $\omega\in \Omega'$, for any probability space $(X,{\cal D},m)$, any continuous semi-flow of positive isometries $\{T_t\}_{t\in ({\mathbb R}^+)^d}$, and any $f\in L^2(X)$ with $\int_{{\mathbb R}^d} \max(1,\log\log(|t|)) d\mu_f<\infty$, the averages
\[ \frac 1{\lfloor \rho^n \rfloor} \sum_{k=1}^{\lfloor \rho^n \rfloor} (T_{n_k+\delta_k(\omega)}f -T_{n_k}E(T_{\delta_k})f)
\]
converges to $0$ almost everywhere for any $\rho>1$. 
If in addition the sequence $\{\delta_k\}$ are i.i.d. and the sequence $\{n_k\}$ is such that the regular ergodic averages along that sequence satisfy a variational inequality $\| \| \frac 1{\lfloor \rho^n \rfloor} \sum_{k=1}^{\lfloor \rho^n \rfloor} T_{n_k}f\|_{v(s)} \|_2 \le C \|f\|_2$ for any $\rho>1$ ($s>2$), then the averages 
$ G_nf(\omega,x)= \frac 1n \sum_{k=1}^{n} T_{n_k+\delta_k(\omega)}f \mbox{ converge a.e.}. $
\end{thm}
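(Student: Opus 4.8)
The plan is to split $G_nf$ into a centered random part and a purely deterministic averaging part and to handle the two by completely different tools. Since the $\delta_k$ are i.i.d., $E(T_{\delta_k})=P$ is a single operator, $P=\int_{({\mathbb R}^+)^d}T_s\,d\nu(s)$ with $\nu$ the common law of $\delta_1$; $P$ is a positive contraction on every $L^p(X)$, so $\|Pf\|_2\le\|f\|_2$. Writing
\[
G_nf(\omega,x)=\frac 1n\sum_{k=1}^n\bigl(T_{n_k+\delta_k(\omega)}f-T_{n_k}E(T_{\delta_k})f\bigr)(x)+\frac 1n\sum_{k=1}^n T_{n_k}(Pf)(x)=:R_nf(\omega,x)+A_n(Pf)(x),
\]
it suffices to produce a single full-measure set $\Omega'\subset\Omega$ such that, for every $\omega\in\Omega'$ and every $f\in L^2(X)$ with $\int\max(1,\log\log|t|)\,d\mu_f<\infty$, both $R_nf$ and $A_n(Pf)$ converge a.e.

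For the deterministic term, $Pf\in L^2(X)$ with $\|Pf\|_2\le\|f\|_2$, so the hypothesised bound $\|\,\|A_{\lfloor\rho^n\rfloor}g\|_{v(s)}\|_2\le C\|g\|_2$ (uniform in $\rho>1$) applies to $g=Pf$. Along each lacunary sequence $\{\lfloor\rho^n\rfloor\}$ the finite $s$-variation forces a.e.\ convergence; a diagonal argument over rational $\rho$ identifies the limit $L$ independently of $\rho$, and the standard passage from lacunary to all times — writing $g=(Pf)^+-(Pf)^-$ and using positivity of the $T_t$ to sandwich $A_Ng$ between $\rho^{-1}A_{\lfloor\rho^m\rfloor}g$ and $\rho\,A_{\lfloor\rho^{m+1}\rfloor}g$ for $\lfloor\rho^m\rfloor\le N<\lfloor\rho^{m+1}\rfloor$, then letting $\rho\downarrow 1$ — upgrades this to a.e.\ convergence of the full sequence $A_n(Pf)$. (If the variational inequality in the hypothesis is already meant for all times, this step is immediate.)

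For the random term $R_nf=\frac 1n\sum_{k=1}^n u_k$ with $u_k:=T_{n_k+\delta_k(\omega)}f-T_{n_k}Pf$, the increments are centered, and, since $\delta_j,\delta_k$ are independent for $j\ne k$, one has $E_\omega\langle u_j,u_k\rangle_{L^2(X)}=0$ by Fubini, while $E_\omega\|u_k\|_{L^2(X)}^2\le\|f\|_2^2$ because the $T_t$ are isometries; thus $\{u_k\}$ is an orthogonal family in $L^2(\Omega\times X)$ with $\|S_N\|_{L^2(\Omega\times X)}^2=\sum_{k\le N}E_\omega\|u_k\|_2^2\le N\|f\|_2^2$ for $S_N=\sum_{k\le N}u_k$. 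A Rademacher–Menshov estimate in $L^2(\Omega\times X)$ then yields the maximal bound $\|\sup_n|R_nf|\|_{L^2(\Omega\times X)}\le C\|f\|_2$ and, summed over the blocks $\lfloor\rho^m\rfloor\le N<\lfloor\rho^{m+1}\rfloor$, shows that the oscillation of $R_Nf$ within each block tends to $0$ a.e.; combined with the first part of Theorem~\ref{thm:1}, which gives $R_{\lfloor\rho^n\rfloor}f\to 0$ a.e.\ on a universal $\omega$-set for all admissible $f$, this forces $R_nf\to 0$ a.e.\ along the full sequence. Finally the maximal inequality $\|\sup_n|R_nf|\|_{L^2}\le C\|f\|_2$ lets one fix $\Omega'$ once and for all by running the argument on a countable dense subset of the admissible class and invoking the Banach principle; adding the two pieces gives a.e.\ convergence of $G_nf$.

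The main obstacle is the bookkeeping of the exceptional set and of the lacunary-to-full passage: the two halves of the decomposition are controlled by genuinely different mechanisms — the deterministic half by the harmonic-analytic variational inequality attached to the specific sequence $\{n_k\}$ (whose short-variation control really uses the arithmetic of $\{n_k\}$, not mere $L^2$ orthogonality), the random half by the uniform estimates for almost periodic polynomials together with a Borel–Cantelli argument in $\omega$ — and one must verify that a single $\Omega'$ of full measure serves simultaneously for every $f$ in the $\log\log$-class and that the conclusion for the lacunary averages $R_{\lfloor\rho^n\rfloor}f$ indeed transfers to the Cesàro averages $G_nf=\frac1n\sum_{k=1}^nT_{n_k+\delta_k(\omega)}f$.
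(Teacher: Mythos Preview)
Your decomposition $G_n=R_n+A_n(Pf)$ is exactly the paper's $K_n=D_n+E_n$, and your handling of both pieces along the lacunary sequence $\{\lfloor\rho^n\rfloor\}$ matches the paper: the first half of the theorem (proved there via Cohen's uniform kernel estimates, Propositions~\ref{prop:Main} and~\ref{prop:estimate.for.D}) already gives $R_{\lfloor\rho^n\rfloor}f\to 0$ a.e.\ on a universal $\Omega'$, and the assumed variational inequality on $A_{\lfloor\rho^n\rfloor}$ transfers to $E_{\lfloor\rho^n\rfloor}f=A_{\lfloor\rho^n\rfloor}(Pf)$. The gap is in your passage from lacunary to full for $R_n$. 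The Rademacher--Menshov argument lives in $L^2(\Omega\times X)$: orthogonality of the $u_k$ there yields at best $\|\sup_N|R_Nf|\|_{L^2(\Omega\times X)}\lesssim\|f\|_2$, which is \emph{not} a pointwise-in-$\omega$ bound of the form $\|\sup_N|R_N^\omega f|\|_{L^2(X)}\le C(\omega)\|f\|_{L^2(X)}$. Without the latter you cannot run the Banach principle on $L^2(X)$ for fixed $\omega$; a density argument over countably many $f_j$ only shows that for a.e.\ $\omega$ each $\sup_N|R_N^\omega f_j|$ is finite a.e., not that the $R_N^\omega$ are uniformly bounded on the whole admissible class. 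The exceptional set produced by your block-oscillation step therefore still depends on $f$, and universality fails precisely at the point you flagged as delicate.

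The paper sidesteps this by never passing to the full sequence on $R_n$ and $A_n(Pf)$ separately. Since both lacunary pieces converge a.e.\ on one universal $\Omega'$ (intersect over a sequence $\rho_k\downarrow 1$), so does their sum $G_{\lfloor\rho^n\rfloor}f$; and $G_n$ \emph{itself} is a positive operator (an average of the positive $T_{n_k+\delta_k}$), so for $f\ge 0$ the sandwich you wrote for $A_n$ applies verbatim to $G_n$ and upgrades lacunary convergence to full convergence with no new $\omega$-exceptional set. This is exactly Proposition~\ref{prop:conv}(b), and it makes the Rademacher--Menshov step unnecessary. Note that the sandwich would \emph{not} apply to $R_n$ alone, since $R_n$ is a difference of positive operators and not positive; combining first and sandwiching afterward is the point.
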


Note that when $n_k=0$ for all $k\ge 1$, and $\{\delta_k\}$ are i.i.d., the above theorem gives a universal convergence result for the averages $H_nf(\omega,x)=\frac 1n \sum_{k=1}^{n} T_{\delta_k(\omega)}f $. Theorem \ref{thm:1} is proven in section 3 for a wider class of averages that include $r$--dimensional averages of random measures.

When the kernels associated with the averages have additional smoothing properties, such as the random averages $F_nf$ above, the averages converge almost everywhere for all $f\in L^2$, provided $1/\epsilon_k\in L^{\alpha}$ for some $\alpha>0$.

Let $\zeta:\mathbb R^d \to \mathbb R$ be positive, integrable with $\int_{\mathbb R^d} \zeta(t) dt =1$. Let $L_{\epsilon}f(x) = \int_{\mathbb R^d} 
\zeta_{\epsilon}(t) T_tf(x) dt$ where $\zeta_{\epsilon}(t)=\frac 1{\epsilon_1 \cdots \epsilon_d} \zeta(\frac{t_1}{\epsilon_1},\ldots,\frac{t_d}{\epsilon_d})$. Let $\{\delta_k\}_{k\in \mathbb N^r}$ and $\{\epsilon_k\}_{k\in \mathbb N^r}$ be independent sequences of i.i.d. positive random vectors, and $\{n_k\}_{k\in \mathbb N^r} \subset (\mathbb R^+)^d$. From here on, $F_nf(x)$
denotes the following smoothed average around the observations ${{n_k}+\delta_k(\omega)}$,
\begin{equation}F_nf(\omega, x) = F_nf(x) = \frac {1}{n^r} \sum_{k\in [1,n]^r} T_{n_k+\delta_k(\omega)}L_{\epsilon_k(\omega)} f(x).
\label{eq:Fn}
\end{equation}
Additionally, for $u\in \mathbb R^d$, $|u|=\max_{1\le i\le d} |u_i|$; and for $u,s\in \mathbb R^d$, $<u,s>=\sum_{i=1}^d u_i s_i$, and $u.s=(u_1s_1,\ldots,u_ds_d)$.

\begin{prop} \label{thm:2} 
Let $\{\delta_k\}_{k\in \mathbb N^r}$ and $\{\epsilon_k\}_{k\in \mathbb N^r}$ be two independent sequences of i.i.d positive random vectors in $\mathbb R^d$, and $\{n_k\}_{k\in \mathbb N^r} \subset (\mathbb R^+)^d$.
Assume they satisfy the following conditions:
\begin{enumerate} \setlength{\itemsep}{\smallskipamount}   
\item[a.] $E( \min_{1\le j\le d} |\epsilon_{e,j}|^{-\alpha})<\infty$ for some $\alpha>0$; 
\item[b.] $\sum_{j\ge 1} j^{r-1} P(\delta_{e}>2^{j^{r\beta}})<\infty$,   for any $e\in \mathbb N^r$;  and
\item[c.]$|n_k|=O(2^{|k|^{r\beta}})$, for some $0<\beta<1$. 
\end{enumerate}

Then, there exists a set $\Omega'\subset \Omega$ with $P(\Omega')=1$, such that for every $\omega\in \Omega'$, for any probability space $(X,{\cal D},m)$, and any continuous semi-flow of positive isometries $\{T_t\}_{t\in ({\mathbb R}^+)^d}$, the partial sums
\[\sum_{k\in [1,n]^r} \frac{T_{n_k+\delta_k(\omega)} L_{\epsilon_k(\omega)}f(x) - T_{n_k}E(T_{\delta_k} L_{\epsilon_k})f(x)}{|k|^r}\]
 converges a.e. for any  $f\in L^2(X)$ and 
 \[\left \| \sum_{k\in \mathbb N^r} \frac{T_{n_k+\delta_k(\omega)} L_{\epsilon_k(\omega)}f(x) - T_{n_k}E(T_{\delta_k} L_{\epsilon_k})f(x)}{|k|^r} \right \|_2 \le C \|f\|_2.\]

If in addition $\sup_t \prod_{j=1}^d \max (1,|t_j|^{\alpha}) |\hat{\zeta}(t)|<\infty$, for some $\alpha>0$, and the averages along the subsequence $\{n_k\}$ satisfy
$\Big\| \| A_nf \|_{v(s)} \Big\|_2 \le C \|f\|_2$ ($s>2$), then 
there exists a positive function  $C(\omega)$ finite P--a.e., such that, for every $\omega\in \Omega'$, for any probability space $(X,{\cal D},m)$, and any continuous semi-flow of positive isometries $\{T_t\}_{t\in ({\mathbb R}^+)^d}$,
\[\Big\| \| F_nf(\omega,.) \|_{v(s)} \Big\|_2 \le C(\omega) c(\beta) \|f\|_2, \quad \mbox{ and}\]
 \[\lim_{n\to \infty} F_nf(\omega,x) \mbox{ exits $m$--a.e. for all }f\in L^2(X).\]
\end{prop}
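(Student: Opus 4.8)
The plan is to treat the two assertions of Proposition~\ref{thm:2} in sequence, reducing everything to the scalar one--dimensional machinery already developed (Theorem~\ref{thm:1} and the Cohen--Cuny uniform estimate) together with a Fourier--analytic exploitation of the smoothing kernel $L_\epsilon$. Throughout, write $\phi_k(t)=E(T_{\delta_k}L_{\epsilon_k})$ evaluated against the spectral measure, so that by the spectral theorem the quantity $\|T_{n_k+\delta_k(\omega)}L_{\epsilon_k(\omega)}f - T_{n_k}E(T_{\delta_k}L_{\epsilon_k})f\|_2^2$ is an integral over $\mathbb R^d$ against $d\mu_f$ of $|e^{i\langle n_k+\delta_k(\omega),t\rangle}\hat\zeta(\epsilon_k(\omega).t)-e^{i\langle n_k,t\rangle}E(e^{i\langle\delta_k,t\rangle}\hat\zeta(\epsilon_k.t))|^2$. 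The point of the smoothing is that $\hat\zeta(\epsilon_k.t)$ decays in $|t|$, so the ``bad'' high--frequency part of $\mu_f$ that obstructs Theorem~\ref{thm:1} for general $L^2$ functions is now damped; this is exactly why one gets all of $L^2$ rather than the $\log\log$ subclass.

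\textbf{Step 1: the series and the $L^2$ bound.} For the first assertion I would follow the Cohen--Cuny template used in Theorem~\ref{thm:1}. Decompose $t$--space dyadically, $\{2^{j-1}\le|t|<2^j\}$, $j\ge 0$. On the block where $|t|\asymp 2^j$, using condition (a) ($E(\min_j|\epsilon_{e,j}|^{-\alpha})<\infty$) and the integrability of $\zeta$ one gets, after taking expectations in $\omega$, a bound of the form $E\|(\text{term}_k)\|_2^2\le c\,\mu_f(2^{j-1}\le|t|<2^j)\cdot\min(1,\text{something})$, and crucially the Cohen--Cuny uniform estimate for almost periodic polynomials gives a bound on $E\big\|\sum_{k\in[1,n]^r}(\text{term}_k)/|k|^r\big\|_2^2$ that is \emph{summable in $j$} once the kernel decay is used --- the $\log(2+|t|)$ weight in their Theorem~4.12 gets absorbed because $\hat\zeta$ kills it. Condition (b) (with the $j^{r-1}$ factor accounting for the multidimensional index $k\in\mathbb N^r$) and (c) are precisely what make the randomized-time Rademacher/Menshov--type estimate close, exactly as in the proof of Theorem~\ref{thm:1}, which the excerpt permits me to invoke for the $r$--dimensional averages of random measures. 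Summing over $j$ gives the claimed $\|\cdot\|_2\le C\|f\|_2$ for the full series over $k\in\mathbb N^r$, for a single universal $\Omega'$ of full measure (Borel--Cantelli on the events $\{|\delta_k|>2^{k^\beta}\}$ produces $\Omega'$). Almost--everywhere convergence of the partial sums then follows from the maximal/variational control together with density of nice functions, in the standard way.

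\textbf{Step 2: from the series to the variational inequality for $F_n$.} Write $F_nf=\frac1{n^r}\sum_{k\in[1,n]^r}T_{n_k}E(T_{\delta_k}L_{\epsilon_k})f + \frac1{n^r}\sum_{k\in[1,n]^r}\big(T_{n_k+\delta_k(\omega)}L_{\epsilon_k(\omega)}f - T_{n_k}E(T_{\delta_k}L_{\epsilon_k})f\big)$. For the second (fluctuation) piece, a summation by parts converts the $\ell^2$ bound on $\sum_k(\text{term}_k)/|k|^r$ from Step~1 into a bound on its $v(s)$--variation (this is the usual Lewko--Lewko / Jones--Seeger--Wright device: square--summability of a weighted series controls the long--jump variation, the short jumps being handled trivially), giving $\big\|\,\|(\text{fluctuation part})\|_{v(s)}\big\|_2\le C(\omega)\|f\|_2$ with $C(\omega)$ finite on $\Omega'$. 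For the first (expectation) piece, note $E(T_{\delta_k}L_{\epsilon_k})f = L g$ for a \emph{fixed} averaging against a probability kernel independent of $\omega$; using the extra hypothesis $\sup_t\prod_j\max(1,|t_j|^\alpha)|\hat\zeta(t)|<\infty$ one checks that $\frac1{n^r}\sum_k T_{n_k}E(T_{\delta_k}L_{\epsilon_k})$ is, up to a rapidly convergent error, a smoothed version of $A_nf=\frac1{n^r}\sum_kT_{n_k}f$, and the assumed variational inequality $\big\|\|A_nf\|_{v(s)}\big\|_2\le C\|f\|_2$ transfers through the (bounded, $\omega$--independent) Fourier multiplier $\hat\zeta$. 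Adding the two pieces gives the stated $\big\|\|F_nf(\omega,\cdot)\|_{v(s)}\big\|_2\le C(\omega)c(\beta)\|f\|_2$; finiteness of the $v(s)$--variation forces a.e. convergence of $F_nf(\omega,\cdot)$, and since the bound holds for \emph{all} $f\in L^2$ with the same $\Omega'$, that is the second conclusion.

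\textbf{Main obstacle.} The delicate point is handling the randomized translations $\delta_k(\omega)$ inside the smoothing operator simultaneously with the randomized scales $\epsilon_k(\omega)$: one must decouple the two i.i.d. sequences, get a bound uniform over the semi--flow $\{T_t\}$ (so everything has to be phrased spectrally, as Cohen--Cuny do, via almost periodic polynomials $\sum_k c_k e^{i\langle n_k+\delta_k(\omega),t\rangle}\hat\zeta(\epsilon_k(\omega).t)$), and verify that the kernel decay genuinely removes the $\log$--type weight so that the $j$--sum converges for \emph{every} $f\in L^2$ rather than a subclass. Making the summation--by--parts argument in Step~2 yield the $v(s)$ norm with the exponent $s>2$ matching the hypothesis on $A_n$, uniformly in $\rho>1$, is the other technical knot, but it is a by--now--routine variational-Rademacher estimate once the $\ell^2$ bound of Step~1 is in hand.
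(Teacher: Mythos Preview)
Your overall architecture (split $F_n$ into expectation plus fluctuation, work spectrally, exploit the decay of $\hat\zeta$ to kill the high-frequency obstruction) matches the paper's, but the two technical cores diverge from what the paper actually does, and one of them hides a gap.

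\textbf{Step 1 versus the paper.} The paper does \emph{not} decompose $t$--space into dyadic shells $\{2^{j-1}\le|t|<2^j\}$ and sum over $j$. Instead, for each pair $n<m$ it splits only once, into $\{|t|\le 2^{m^{r\beta}}\}$ and its complement: on the first region the Cohen--Cuny uniform estimate (Proposition~\ref{prop:Main}) gives
\[
\Big|\sum_{k\in I_{n,m}}\tfrac{d_k(\omega,t)}{|k|^r}\Big|^2 \lesssim C_1(\omega)\,m^{r\beta}\sum_{j=n+1}^m \tfrac1{j^{r+1}},
\]
and on the complement the decay $|\hat\zeta(\epsilon_k.t)|\lesssim (\min_j|\epsilon_{k,j}|\,|t|)^{-\alpha}$ together with condition~(a) and the SLLN gives $C_2(\omega)^2 m^{2r}2^{-2\alpha m^{r\beta}}$. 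The key device you do not mention is M\'oricz's moment inequality (Proposition~\ref{prop:Mor}): the combined bound has the shape $A_m\sum_{k=n+1}^m\alpha_k$ with $A_m\sim m^{r\beta}$, $\alpha_k\sim k^{-(r+1)}$, and since $\sum\alpha_kA_k(\log k)^2<\infty$ one gets both a.e.\ convergence \emph{and} the maximal $L^2$ bound in one stroke. Your dyadic-in-$t$ scheme is not obviously compatible with how the Cohen--Cuny estimate is stated (a supremum over $[-T,T]^d$, not over a shell), and without the M\'oricz lemma you have not explained how a.e.\ convergence of the partial sums follows.

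\textbf{Step 2 versus the paper.} Your treatment of the expectation part $E_nf=\int E(\zeta_{\epsilon_e}(t))\,A_n\big(E(f\circ T_{\delta_e+t})\big)\,dt$ agrees with the paper: the variational bound for $A_n$ transfers through the fixed kernel. For the fluctuation $D_n$, however, the paper does \emph{not} derive the $v(s)$ bound from the series of Step~1 by summation by parts. It proceeds directly: it chooses a subsequence $\{m_l\}$ denser than dyadic (about $k^2$ equally spaced points inside each block $[2^k,2^{k+1})$), bounds $\|\{D_{m_l}f\}\|_{v(2)}$ by the square function $\sum_l\|D_{m_l}f\|_2^2$ (again split at $|t|=2^{2^{\beta k}}$, the large-$|t|$ piece using the extra $\hat\zeta$ decay hypothesis), and then controls the short variation $D_n-\tilde D_n$ within each gap $[m_l,m_{l+1})$ via the trivial bound $|\hat D_n-\hat D_{n+1}|\lesssim 1/n$ and Cauchy--Schwarz, the gap length $2^k/k^2$ being precisely what makes $\sum_k k^{-2}$ close. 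Your summation-by-parts route is a genuinely different idea, but the sentence ``square-summability of a weighted series controls the long-jump variation, the short jumps being handled trivially'' is where the proof would have to happen and is not routine: an $L^2$ bound (or even a maximal bound) on the partial sums $S_n=\sum_{|k|\le n}d_k/|k|^r$ does not by itself yield a $v(s)$ bound on the Ces\`aro-type quantities $D_n=S_n-\text{(average of }S_j\text{)}$; you would still need an independent short-variation argument of exactly the kind the paper carries out. As written, this step is a gap.
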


\section{Uniform Estimates}
Universal pointwise convergence theorems in $L^2$ can be obtained, by using a transfer argument or spectral representation, through uniform estimates of Fourier transforms of the associated kernels. Estimates for random trigonometric polynomials have been essential in proving convergence of random Fourier series as well as ergodic averages along subsequences and modulated ergodic averages. Paley and Zygmund(1930--32) \cite{PZ} and Salem and Zygmund (1954) \cite{SZ} provided the first estimates for trigonometric sums in their study of Fourier series with random signs: $\sum_{k=1}^{\infty} \epsilon_k c_k e^{ikx}$ where the $\{\epsilon_k\}$ is a Rademacher sequence, and $\{c_k\}$ is a sequence of complex numbers.   They were also used to prove convergence of random Fourier and almost periodic series \cite{ CuzickLai,FanSchneider,Weber, Cohen.Cuny2, Cohen.Cuny3,Cohen}.

In ergodic theory, their study yielded applications to the convergence of averages along subsequences and averages with random weights. Bourgain  \cite{B3,B2,B1,B}  used them to prove pointwise convergence of ergodic averages along polynomial sequences and Bourgain and Weirdl \cite{Weirdl} applied them to pointwise convergence of ergodic averages along sequences of primes. Bourgain, Bergelson and Boshenitzan \cite{BBB} use them to prove pointwise convergence of ergodic averages with random weights as well as Assani \cite{A1, A2}, Rosenblatt and Weirdl \cite{ WirdlRos}, and Cohen and Lin \cite{CohenLin}.  Schneider \cite{Sch2} used them to prove convergence of ergodic averages along perturbed  sequences of squares, with integer perturbations. 

These works used an estimate of the associated trigonometric polynomial by means of an estimate on the derivative of those polynomials. The work of 
 Cohen and Cuny \cite{Cohen.Cuny3} extended the estimates of Salem and Zygmund  to obtain uniform estimates of multidimensional random exponential sums
of the form $\sum_{k=1}^n X_k \, e^{i(\alpha_k.t)}$, where $\{X_n\}$ is a sequence of random variables,
$\{\alpha_k\}\subset \mathbb R^d$ are sequences of real numbers, and $t\in \mathbb R^d$. The estimates on the exponential sums 
 allowed them to prove pointwise convergence results for series of the form 
 \[\sum_{k=1}^{\infty} X_k T^{n_k} f\]
 where $\{n_k\}\subset {\mathbb N}^d$, $T^{n_k}f=T_1^{n_{k,1} }T_2^{n_{k,2}}...T_d^{n_{k,d}}f$, where $T_1,...,T_d$ are commuting isometries. Several additional authors studied the convergence of power series of contractions including  Assani \cite{A-duality}, Boukhari and Weber \cite{BW}, Cohen and Lin \cite{CohenLin}, Cohen and Cuny \cite{Cohen.Cuny2,Cohen.Cuny3} and  Cohen \cite{Cohen}.

To handle the kernels corresponding to the averages $F_nf, G_nf,H_nf$ above, we introduce a larger framework that allow us to work with $\mathbb R^d$ --actions rather than $\mathbb Z^d$--actions. 

\begin{defi}
Let $(\Omega,{\cal F},{\bf P})$ be a probability space, and $\cal B$ the Borel sigma--algebra on $\mathbb R^d$, $d\ge 1$.  A function $\nu:\Omega \times {\mathbb R^d}\mapsto {\mathbb C^d}$ is a finite complex valued {\sc transition measure} on $\Omega \times {\mathbb R^d}$ if
\begin{enumerate}[i.]
\item $\nu(\omega,.)$ is a finite complex valued measure on $\cal B$, for any $\omega \in \Omega$, and
\item $\nu(.,B)$ is an ${\cal F}$--measurable function for any $B\in {\cal B}$.
\item Letting $|\nu(\omega)|:=|\nu|(\omega,\mathbb R^d)$ denote its variation norm, we also require
 $E(|\nu( . )|)<\infty$.
\end{enumerate}
\end{defi}

\begin{notation} 
If $\nu$ is a finite complex valued transition measure, $E\nu$ denotes the measured on $\cal B$ defined by $E\nu(B) = \int_{\Omega} \nu(\omega,B) dP$, for any $B\in \cal B$.
\end{notation}

 For $\{T_t\}_{t\in (\mathbb R^+)^d}$ a semi--flow of $L^2$ contractions, convolution with measures $\nu(\omega,.)$ define bounded operators in $L^2$:
 \[ \nu^{\omega} f(x) = \int_{\mathbb R^d} T_tf(x) \nu(\omega,dt).\]

\begin{defi} Let $\{\nu_k\}_{k\in I}$ be a collection of transition measures.
The sequence $\{\nu_k\}$ is {\sc independent} if for every finite set of Borel measurable simple functions $g_1,\ldots,g_m$  on $\mathbb R^d$, and any finite set $k_1,k_2,\ldots,k_m$ of pairwise distinct indices, the random variables $\{ \int_{\mathbb R^d} g_i(x)\nu_{k_i}(\omega,dx) \}_{i=1,\ldots ,m}$  are independent.
\end{defi}

Given a sequence of independent transition measures $\{\nu_k\}_{k\in \mathbb N^r}$,
consider averages of the form
\[K^{\omega}_nf(x)=\frac 1{n^r} \sum_{k\in [1,n]^r} \nu_k^{\omega}(f)(x).\]
The kernels associated with such averages are  
\[\frac 1{n^r} \sum_{k\in [1,n]^r} \hat{\nu}^{\omega}_k(t), \mbox{ for } t\in \mathbb R^d \]
where 
$\hat{\nu}^{\omega}_k(t)=\int e^{i<t,u>} \nu(\omega,du)$
is the Fourier--Stieljes transform corresponding to the measure $\nu_k(\omega,.)$.
Uniform estimates that control such kernels have already been considered by Cohen in \cite{Cohen}.

\begin{thm} [Theorem 2.8 \cite{Cohen}] \label{thm:Main} Let $\{L_k\}$ be a sequence of positive numbers, $L_k\ge 1$, such that $\sum_{n=1}^{\infty}\sum_{m=n+1}^{\infty} (1/L_{n,m}^2) <\infty$, with $L_{n,m}=\sum_{k=n+1}^m L^2_k$. 
Let $\{\nu_k\}$ be a sequence of independent, finite complex valued transition measure on $\Omega\times {\cal B}$ with $ \| \, |\nu_k|(.,\mathbb R^d) \, \|_{L^{\infty}(\Omega)} <\infty$, for all $k\ge 1$. 
Let \[P_{n,m}(t)=\sum_{k=n+1}^m \left[\int_{[-L_k,L_k]^d} e^{i<t,u>} \nu_k(du)-\int_{[-L_k,L_k]^d} e^{i<t,u>} E\nu_k(du)\right] \] 
be the sum of the difference of the (truncated) Fourier--Stieljes transform corresponding to  the measures $\{\nu_k( . ,t)\}$ and their expected values. 
And let  
$V_{n,m}=\sum_{k=n+1}^m \| |\nu_k| \|_{L^{\infty}(\Omega)}^2$. 
Then, there exists $\epsilon>0$ and $C>0$, independent of $\{\nu_k\}$, such that
\[  \left \| \sup_{m>n} \sup_{T\ge 2} \exp({\epsilon \  \frac{\max_{t\in [-T,T]^d} |P_{n,m}(\omega,t)|^2}{V_{n,m} \log(L_{n,m}^{2+d/2} \, T^{d+2})}}) \right \|_{L^1(\Omega)}<C.\] 
\end{thm}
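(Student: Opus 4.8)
The plan is to reduce the statement about the random exponential-sum kernels $P_{n,m}(\omega,t)$ to the one-dimensional estimate of Salem--Zygmund type already available for random almost periodic polynomials, and then to assemble the pieces via a chaining/maximal argument. First I would fix $n<m$ and $T\ge 2$ and examine the random polynomial $Q(t)=P_{n,m}(\omega,t)$ on the cube $[-T,T]^d$. The key point is that $Q$ is a ``nice'' random trigonometric object: it is a finite sum of independent centered terms $\int_{[-L_k,L_k]^d} e^{i\langle t,u\rangle}(\nu_k-E\nu_k)(du)$, each bounded in modulus by $2\||\nu_k|\|_{L^\infty(\Omega)}$, and its derivative in each coordinate $t_j$ is controlled because the truncation to $[-L_k,L_k]^d$ introduces a factor at most $L_k$ per differentiation; hence $\|\nabla Q\|_{L^\infty([-T,T]^d)}$ is dominated (after taking expectations of sub-Gaussian tails) by something like $\sqrt{V_{n,m}}\,L_{n,m}\,T^{?}$. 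The standard device (as used by Bourgain, and by Cohen--Cuny \cite{Cohen.Cuny3}) is then to bound $\max_{t\in[-T,T]^d}|Q(t)|$ by its value on a finite net of mesh $\sim (\sqrt{V_{n,m}}L_{n,m}T^{\cdots})^{-1}$ plus the Lipschitz error, so that the maximum over the cube is comparable, up to the logarithm of the net cardinality, to the maximum over finitely many fixed points. At each fixed $t$, $Q(t)$ is a sum of independent centered bounded random variables, so Hoeffding/Azuma gives a sub-Gaussian tail with variance proxy $C\,V_{n,m}$; a union bound over the net of size $\sim (L_{n,m}^{c}T^{c'})^d$ converts this into the claimed exponential integrability with the $\log(L_{n,m}^{2+d/2}T^{d+2})$ normalization, for a suitable small $\epsilon$.

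The next step is to remove the dependence on the two parameters $n<m$ and $T\ge 2$, i.e. to pass from ``for each fixed $(n,m,T)$'' to the double supremum inside the $L^1$ norm. For the supremum over $T\ge 2$ I would note that it suffices to take $T$ ranging over dyadic values $2^\ell$, $\ell\ge 1$, since on each dyadic annulus the normalizing denominator $\log(L_{n,m}^{2+d/2}T^{d+2})$ changes only by a bounded factor; so $\sup_{T\ge2}$ costs only a summable series in $\ell$ once $\epsilon$ is taken small enough to make $\exp(\epsilon\,(\cdots))$ have an $\ell^1$-summable contribution in $\ell$ (this is exactly where the exponent $T^{d+2}$, strictly larger than the net exponent $T^d$, buys the summability). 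For the supremum over $m>n$ and over $n$, I would invoke the hypothesis $\sum_n\sum_{m>n} 1/L_{n,m}^2<\infty$: writing the exponential moment bound for the fixed pair $(n,m)$ as $\le C$ and using that $V_{n,m}\ge L_{n,m}^2$ (since $L_k\ge1$ forces $\||\nu_k|\|_{L^\infty}\ge\|\hat\nu_k(0)\text{ part}\|$... more carefully, one arranges $V_{n,m}\asymp L_{n,m}$-type comparison, or simply carries $V_{n,m}$ alongside), a Borel--Cantelli / summation-of-tail-probabilities argument upgrades the individual bounds to a bound on the supremum; the series $\sum_{n,m}1/L_{n,m}^2$ is precisely what makes $\sum_{n,m}P(\text{bad event for }(n,m))<\infty$.

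The main obstacle, and the step I expect to require the most care, is the simultaneous control of \emph{both} suprema together with the correct powers of $L_{n,m}$ and $T$ in the denominator: one must choose the net mesh, the dyadic decomposition in $T$, and the sub-Gaussian variance proxy so that a \emph{single} small $\epsilon$ works uniformly, the Lipschitz-error term is absorbed rather than dominating, and the union bound over $(n,m,\ell)$ and over the net is summable. Concretely, the delicate bookkeeping is: the net for $[-T,T]^d$ at mesh $h$ has $\sim(T/h)^d$ points, the Lipschitz constant is $\sim\sqrt{V_{n,m}}\,L_{n,m}\,T$ (up to log factors from the sub-Gaussian max of the derivative, which themselves must be folded in), so one needs $h\sim 1/(\sqrt{V_{n,m}}L_{n,m}T)$ and the net has $\sim(\sqrt{V_{n,m}}L_{n,m}T^2)^d$ points; taking logs gives a term $\lesssim d\log(V_{n,m}L_{n,m}^2T^4)$, which one must check is dominated by a constant multiple of $\log(L_{n,m}^{2+d/2}T^{d+2})$ after using $V_{n,m}\le (\text{const})L_{n,m}^2$ or the relevant normalization — this is where the slightly unusual exponents $2+d/2$ and $d+2$ in the statement come from, and verifying they are large enough (but the exponential integrability constant $\epsilon$ small enough) is the real content. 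Once that is pinned down, the rest is the routine chaining-plus-Hoeffding-plus-Borel--Cantelli machine.
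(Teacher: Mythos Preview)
The paper does not prove this theorem; it is quoted verbatim as Theorem~2.8 of Cohen \cite{Cohen} and used as a black-box input to Proposition~\ref{prop:Main}. There is thus no proof in the present paper to compare your proposal against.

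That said, your sketch is essentially the argument one finds in the cited source and in the predecessor Cohen--Cuny papers \cite{Cohen.Cuny2,Cohen.Cuny3}: a pointwise sub-Gaussian tail from Hoeffding for the sum of independent centered bounded summands; control of the supremum over $[-T,T]^d$ via a net whose cardinality is governed by the Lipschitz bound (the truncation to $[-L_k,L_k]^d$ gives $|\partial_{t_j}Q|\le \sum_k L_k\,\| |\nu_k|\|_{L^\infty}$, which is where $L_{n,m}$ enters); dyadic reduction in $T$; and then summation over the pairs $(n,m)$ using the hypothesis $\sum_{n}\sum_{m>n} L_{n,m}^{-2}<\infty$. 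So the route is the standard one.

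One point you flag yourself and should not leave loose: there is \emph{no} inequality $V_{n,m}\ge L_{n,m}^2$ in general, since the $L_k$ and the $\| |\nu_k|\|_{L^\infty}$ are unrelated quantities. The Borel--Cantelli step therefore cannot go through a comparison of $V$ and $L$. What actually happens is that, after normalizing by $V_{n,m}\log(L_{n,m}^{2+d/2}T^{d+2})$, the exponential moment at a fixed $(n,m,T)$ is bounded by an absolute constant, and the deliberately generous exponents $2+d/2$ on $L_{n,m}$ and $d+2$ on $T$ (larger than the $\sim d$ that the net alone would require) are precisely what buys geometric decay in the dyadic $T$-index and a residual factor $L_{n,m}^{-2}$ in the $(n,m)$ sum, matching the summability hypothesis. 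Your final paragraph correctly identifies this bookkeeping as the crux; just make sure you do not rely on the spurious $V\ge L^2$ comparison when you write it out.
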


An immediate consequence is the following application. 
\begin{prop} \label{prop:Main} Let $\varphi:\mathbb R \to \mathbb R^+$ with $\varphi(x)\gtrsim |x|$. 
Let $\{\nu_k\}_{k\in \mathbb N^r}$ be independent complex valued transition measures on 
$\Omega\times {\cal B}$, 
$\{a_k\}_{k\in \mathbb N^r}$ be a sequence in $(0,1]$.   
Assume that 
\[\sum_{k\in \mathbb N^r} a_k E|\nu_k|([|t|>\varphi(|k|)]) < \infty.\]
Let $N_{n,m}=\{k\in \mathbb N^r: n<|k|\le m\}$.
Then 
there exists $C:\Omega\to \mathbb R^+$ finite P--a.e. such that, for all $m$, 
\begin{equation} \label{eq:main} \sup_{m>n} \sup_{T>2} \frac  
{\max_{t\in [-T,T]^d}  \left | \sum_{k\in N_{n,m}} a_k ( \hat{\nu}^{\omega}_k(t )-\hat{E\nu}_k(t))  \right |^2}  
{ 1+
\sum_{k\in N_{n,m}} a_k^2  \left \| |\nu_k| \right \|^2_{L^{\infty}(\Omega)} \log(\max(\varphi(m), T))} 
\le c(d) C(w). 
\end{equation}
In particular, if $\{\nu_k\}_{k\in \mathbb N^r}$ are independent probability transition measures with 
\[\sum_{k\in \mathbb N^r} E\nu_k([|t|>\varphi(|k|)]) < \infty, \mbox{ then}\]
\begin{equation} \label{eq:main2}   
\sup_{m>n} \sup_{T>2} \frac 
{\max_{t\in [-T,T]^d} \left | \sum_{k\in N_{n,m}}     
a_k ( \hat{\nu}^{\omega}_k(t )-\hat{E\nu}_k(t))  \right |^2 }
{\left [ \sum_{k\in N_{n,m}} a_k^2 \right ] \log(\max(\varphi(m), T))}
\le c(d) C(w) .
\end{equation}
\end{prop}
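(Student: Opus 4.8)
The plan is to deduce this directly from Theorem~\ref{thm:Main}, by an appropriate truncation and by converting the exponential--moment estimate there into an almost sure bound. First I would absorb the weights into the measures: each $a_k\in(0,1]$, so $\{a_k\nu_k\}_{k\in\mathbb N^r}$ are again independent transition measures with $\|\,|a_k\nu_k|(\cdot,\mathbb R^d)\|_{L^\infty(\Omega)}=a_k\|\,|\nu_k|(\cdot,\mathbb R^d)\|_{L^\infty(\Omega)}\le\|\,|\nu_k|(\cdot,\mathbb R^d)\|_{L^\infty(\Omega)}$, and with this replacement the quantity $V_{n,m}$ of Theorem~\ref{thm:Main} becomes exactly $\sum_{k\in N_{n,m}}a_k^2\|\,|\nu_k|\,\|_{L^\infty(\Omega)}^2$, the sum appearing in \eqref{eq:main}; if some $\|\,|\nu_k|\,\|_{L^\infty(\Omega)}$ is infinite the estimate is vacuous on every block containing that $k$, so I may assume these norms all finite. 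Next I would pass from $\mathbb N^r$ to $\mathbb N$ by enumerating the shells $\{k:|k|=j\}$, $j=1,2,\dots$, in consecutive blocks (the order within a shell being irrelevant); then each $N_{n,m}=\{k:n<|k|\le m\}$ is an interval of consecutive indices, so the supremum over the blocks $N_{n,m}$ is dominated by the supremum over all index intervals handled by Theorem~\ref{thm:Main}. Finally, replacing $\varphi$ by its increasing majorant only strengthens the tail hypothesis, so I will treat $\varphi$ as non-decreasing, as it is in the applications.

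\emph{Truncation.} I would take $L_k:=\max\bigl(1,\,|k|^r\varphi(|k|)\bigr)$. Then $L_k\ge1$ and $L_k\ge\varphi(|k|)$, and since the linear index of $k$ is at most $|k|^r\le L_k$, along the enumeration $L_{n,m}=\sum_{n<\ell\le m}L_\ell^2\ge(n+1)^2(m-n)$, so $\sum_n\sum_{m>n}L_{n,m}^{-2}<\infty$ and the hypothesis of Theorem~\ref{thm:Main} holds. For every $t$,
\[
\Bigl|\hat{\nu}^{\omega}_k(t)-\int_{[-L_k,L_k]^d}e^{i<t,u>}\nu_k(\omega,du)\Bigr|\le|\nu_k|\bigl(\omega,[|u|>L_k]\bigr)\le|\nu_k|\bigl(\omega,[|u|>\varphi(|k|)]\bigr),
\]
and likewise with $\nu_k$ replaced by $E\nu_k$; so, writing $R(\omega):=\sum_{k\in\mathbb N^r}a_k\bigl(|\nu_k|(\omega,[|u|>\varphi(|k|)])+E|\nu_k|([|u|>\varphi(|k|)])\bigr)$, I obtain
\[
\Bigl|\sum_{k\in N_{n,m}}a_k\bigl(\hat{\nu}^{\omega}_k(t)-\hat{E\nu}_k(t)\bigr)\Bigr|\le\bigl|P_{n,m}(\omega,t)\bigr|+R(\omega)\qquad(n<m,\ t\in\mathbb R^d),
\]
where $P_{n,m}$ is the polynomial of Theorem~\ref{thm:Main} formed from $\{a_k\nu_k\}$ truncated at the levels $L_k$. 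The standing hypothesis $\sum_k a_kE|\nu_k|([|t|>\varphi(|k|)])<\infty$ gives $ER(\omega)<\infty$ by Tonelli, hence $R(\omega)<\infty$ $P$--a.s.

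\emph{Conclusion.} Since the $L^1(\Omega)$ norm of the exponential in Theorem~\ref{thm:Main} is finite, that exponential is finite $P$--a.s., so there is $\tilde C(\omega)<\infty$ $P$--a.s. with $\max_{t\in[-T,T]^d}|P_{n,m}(\omega,t)|^2\le\tilde C(\omega)V_{n,m}\log(L_{n,m}^{2+d/2}T^{d+2})$ for all $n<m$, $T\ge2$. Inserting this into the last display, squaring with $(x+y)^2\le2x^2+2y^2$, dividing by the denominator $1+V_{n,m}\log(\max(\varphi(m),T))$ of \eqref{eq:main}, using $R(\omega)^2/(1+\cdot\,)\le R(\omega)^2$ on one resulting term and $\frac{as}{1+bs}\le\frac ab$ (for $s\ge0$, $a,b>0$) on the other, I am reduced to checking $\log(L_{n,m}^{2+d/2}T^{d+2})\le c(d,r)\log(\max(\varphi(m),T))$. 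This follows from $\log T\le\log(\max(\varphi(m),T))$ and, since there are at most $m^r$ indices $k$ with $|k|\le m$, from $L_{n,m}\le m^r\bigl(m^r\varphi(m)\bigr)^2$ together with $\varphi(m)\gtrsim m$, which give $\log L_{n,m}\lesssim_{d,r}\log\varphi(m)$. Then $C(\omega):=c(d,r)(\tilde C(\omega)+R(\omega)^2)$, finite $P$--a.s., yields \eqref{eq:main}, and \eqref{eq:main2} follows by specializing to $\|\,|\nu_k|\,\|_{L^\infty(\Omega)}=1$, its hypothesis implying that of \eqref{eq:main} because $a_k\le1$.

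The main obstacle is the choice of the truncation levels $L_k$, which must satisfy three competing demands: they must be at least $\varphi(|k|)$ in order to kill the truncation tails through the standing hypothesis, they must grow fast enough (once $\mathbb N^r$ is linearized) for $\sum_{n,m}L_{n,m}^{-2}$ to converge, and yet they must remain polynomial in $|k|$ and $\varphi(|k|)$ so that $\log L_{n,m}$ stays comparable with $\log\varphi(m)$ — the choice $L_k\asymp|k|^r\varphi(|k|)$ threads all three at once. Everything else is the routine passage from an exponential--moment estimate to an almost sure one, plus the observation that the tail-mass hypothesis simultaneously controls the truncation error and furnishes the required integrability of $R(\omega)$.
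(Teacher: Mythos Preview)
Your argument for \eqref{eq:main} is essentially the paper's own: truncate at a level $L_k$ comparable to $\varphi(|k|)$, apply Theorem~\ref{thm:Main} to the truncated polynomial $P_{n,m}$, and control the remainder $Q_{n,m}$ (your $R(\omega)$) as an $L^1(\Omega)$ tail sum via the standing hypothesis. The paper simply takes $L_k=\varphi(|k|)$ and uses $\varphi(x)\gtrsim|x|$ to get $L_{n,m}\lesssim\varphi(m)^3$; your choice $L_k\asymp|k|^r\varphi(|k|)$ is a harmless variant that makes the $\mathbb N^r\to\mathbb N$ reindexing more explicit. Either way the combination of the two pieces gives \eqref{eq:main}.

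There is, however, a gap in your passage to \eqref{eq:main2}. Specializing \eqref{eq:main} to $\|\,|\nu_k|\,\|_{L^\infty(\Omega)}=1$ yields a denominator $1+\bigl[\sum_{k\in N_{n,m}}a_k^2\bigr]\log(\max(\varphi(m),T))$, not the denominator $\bigl[\sum_{k\in N_{n,m}}a_k^2\bigr]\log(\max(\varphi(m),T))$ required in \eqref{eq:main2}. Since the $a_k\in(0,1]$ are arbitrary, $\sum_{k\in N_{n,m}}a_k^2$ can be made as small as one likes, and the ``$+1$'' cannot be absorbed. The trouble is that your tail bound is the block-independent constant $R(\omega)^2$, so $R(\omega)^2/\bigl[\sum_{k\in N_{n,m}}a_k^2\bigr]$ blows up. The paper fixes this by going back to the tail $Q_{n,m}$ and applying Cauchy--Schwarz:
\[
|Q_{n,m}(\omega,t)|\lesssim\Bigl[\sum_{k\in N_{n,m}}a_k^2\Bigr]^{1/2}\Bigl[\sum_{k\in N_{n,m}}\bigl(\nu_k(\omega,J_{|k|}^c)+E\nu_k(J_{|k|}^c)\bigr)\Bigr]^{1/2},
\]
where the second factor is bounded (using that each $\nu_k$ is a probability measure, so $\nu_k(\omega,J_{|k|}^c)\le 1$ and the squares collapse) by a quantity in $L^1(\Omega)$ via the stronger hypothesis $\sum_k E\nu_k([|t|>\varphi(|k|)])<\infty$. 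This produces the needed factor $\bigl[\sum a_k^2\bigr]^{1/2}$ in the tail and combines with the $P_{n,m}$ estimate to give \eqref{eq:main2}. Once you insert this step, your argument is complete and matches the paper's.
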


\begin{proof}
First note that 
$ | a_k (\hat{\nu^{\omega}}_k(t)-\hat{E\nu}_k(t))| \le 2 a_k  \left \| |\nu_k| \right \|_{L^{\infty}(\Omega)}$. Thus $V_{n,m}$ in theorem \ref{thm:Main} becomes $\sim \sum_{k\in N_{n,m}} a_k^2  \left \| |\nu_k| \right \|^2_{L^{\infty}(\Omega)}$. 
Let  $J_n=[-\varphi(n),\varphi(n)]^d$. 
With this choice, the numbers $L_n= \varphi(n)$ satisfy the condition of Theorem \ref{thm:Main}. Let $\hat{D\nu}_k(\omega,t)= a_k (\hat{\nu^{\omega}}_k(t)-\hat{E\nu}_k(t))$.

Decompose $\sum_{k\in N_{n,m}}  \hat{D\nu}_k(\omega,t) =P_{n,m} + Q_{n,m},$
where
\[
  P_{n,m}(\omega,t)=  \sum_{k\in N_{n,m}} a_k  \left[ \int_{J_{|k|}} e^{i<t,u>} \nu_k(\omega,du)
-  \int_{J_{|k|}} e^{i<t,u>}  E\nu_k(du) \right], \]
and
\begin{align*}
 Q_{n,m}(\omega,t) =  & 
\sum_{k\in N_{n,m}} a_k \left[ \int_{J^c_{|k|}} e^{i<t,u>} \nu_k(\omega,du)
-  \int_{J^c_{|k|}} e^{i<t,u>}  E\nu_k(du) \right] 
 \end{align*}
The estimate for $P_{n,m}$ is obtained from the previous theorem by computing 
\[L_{n,m} \sim \sum_{n<j\le m} \varphi^2(j) \lesssim m \varphi(m)^2\lesssim \varphi(m)^{3}.\] 
Yielding the existence of $C_1:\Omega\to \mathbb R^+$ finite $P$--a.e. such that
\begin{align*}
\max_{t\in [-T,T]^d}  |P_{n,m}(\omega,t)|^2 \lesssim & C_1(\omega) V_{n,m} \log(L_{n,m}^{2+d/2}T^{d+2}) \\
\sim &
c(d) C_1(\omega) \left [ \sum_{k\in N_{n,m}} a_k^2  \left \| |\nu_k| \right \|^2_{L^{\infty}(\Omega)}   \right ] \log(\max(\varphi(m), T)).
\end{align*}
We also have
\[|Q_{n,m}(\omega,t)| \lesssim \sum_{k\in N_{n,m}} a_k [|nu_k|(\omega,J^c_{|k|}) + E|\nu_k|(J^c_{|k|})]
.\]
By assumption, there exists $C_2(\omega)\in L^1(\Omega)$ such that 
\[ \sup_{m>n\ge 1} \sup_t |Q_{n,m}(\omega,t)| \lesssim C_2(\omega).\]   
Combining both estimates, (\ref{eq:main}) is obtained.

If $\{\nu_k\}_{k\in \mathbb N^r}$ are independent probability transition measurThus, 
then 
\[|Q_{n,m}(\omega,t)| 
\lesssim \left [ \sum_{k\in N_{n,m}} a_k^2 \right ]^{1/2} \left [ \sum_{k\in N_{n,m}} 
[\nu_k(\omega,J^c_{|k|}) + E\nu_k(J^c_{|k|})] \right ]^{1/2}
.\]
Thus, if $\sum_{k\in \mathbb N^r} E\nu_k([|t|>\varphi(|k|)]) < \infty$, then 
there exists $C_3(\omega)\in L^1(\Omega)$ such that 
\[  \sup_t |Q_{n,m}(\omega,t)|^2 \le C_3(\omega) \left [ \sum_{k\in N_{n,m}} a_k^2 \right ] \quad \mbox{for all } m>n.\]   
Combining both estimates, (\ref{eq:main2}) is obtained.

\hfill $\square$
\end{proof}

\begin{example}\label{ex:2} Let $\{n_k\}_{k\in \mathbb N^r}\subset ({\mathbb R^+})^d$, $\{\delta_k\}_{k\in \mathbb N^r}$ be i.i.d. random vectors.
Let $\{\theta_k\}_{k\in \mathbb N^r}$ be independent copies of a probability measure  $\theta$ on $\mathbb R^d$ which are independent of $\{\delta_k\}$. Let $\nu_k=\theta_k * \delta_{n_k+\delta_k}$. Now 
\[P_m(\omega,t)=\sum_{k\in [1,m]^r} (\hat{\nu^{\omega}_k}(t) - \hat{E\nu}_k(t))
=\sum_{k\in [1,m]^r} e^{i<n_k,t>} (\hat{\nu}^{\omega}_k e^{i<\delta_k({\omega}),t>}-\hat{E\nu}_k E(e^{i<\delta_k,t>})).\]

 If $|n_k| \le c(2^{|k|^{r\beta}})$, for some $0<\beta<1$ and all $k$, we use $\varphi(x)=(2+c) \, 2^{|x|^{r\beta}}$, and the assumption of Proposition \ref{prop:Main} becomes
 \begin{align*}
 \sum_{k\in \mathbb N^r} \nu_k(|t|>(2+c) \ 2^{|k|^{r\beta}})
 \le & \sum_{k\in \mathbb N^r} \left [\theta(|t|> 2^{|k|^{r\beta}}) + P(\delta_e>2^{|k|^{r\beta}}) \right ] \\
 \le  & \sum_{n=1}^{\infty} n^{r-1} \left [\theta(|t|> 2^{n^{r\beta}}) + P(\delta_e>2^{n^{r\beta}}) \right ].
 \end{align*}
If this series is finite, then by Proposition \ref{prop:Main} there exists  $C(\omega)\in L^1(\Omega)$, such that
\[ \sup_{m\ge 1} \sup_{T\ge 2}  \max_{t\in [-T,T]^d} \frac{ |P_{m}(\omega,t)|^2}{m^r \log(\max(T, 2^{m^{r\beta}}))} =C(\omega)<\infty.\]
In particular, for $\hat{D}_n(\omega,t)=\frac 1{n^r} P_n(\omega,t)$ and $T\le 2^{n^{r\beta}}$, we obtain
\[  \max_{t\in [-T,T]^d} |\hat{D}_n(\omega,t)|^2 \le C(w) \frac{\log(\max(T, 2^{n^{r\beta}}))}{n^r}
\le C(w)\frac 1{n^{r(1-\beta)}}.\] 

\end{example}

\section{Applications to Ergodic Theory}
In this section,  
$\{\nu_k\}_{k\in \mathbb N^r}$ denotes a sequence of independent transition measures on $\Omega\times \cal B$, 
$V_{m}=\sum_{k\in [1,m]^r} \| |\nu_k| \|_{L^{\infty}(\Omega)}^2 \label{sec3:Vm}$ and $b_m=\sum_{k\in [1,m]^r} \| |\nu_k|(.,\mathbb R^d)\|_{L^{\infty}(\Omega)}$. 
The averages 
\[K^{\omega}_nf(x)=K_nf(x)=\frac 1{b_n} \sum_{k\in [1,n]^r} \nu^{\omega}_k(f)(x)\] 
have associated kernels
\begin{equation}
\label{eq:Kn} \hat{K}^{\omega}_n(t) =\hat{K}_n(t) =\frac 1{b_n} \sum_{k\in [1,n]^r} \hat{\nu}_k({\omega},t)     
=\frac 1{b_n} \sum_{k\in [1,n]^r}  \int e^{i<t,u>} \nu_k(\omega,du).
\end{equation} \\
Note that if the $\nu_k$'s are probability measures, then $b_m=V_m=m^r$, the number of terms of the averages in consideration.

The above averages have convergence properties provided that the averages defined by their expected values 
\[ E_n(t) = E(K_n(t)) = \frac 1{b_n} \sum_{k\in [1,n]^r} E\nu_k f(x).
\] 
are well behaved averages. Let
\[ \hat{E}_n(t) = E(\hat{K}_n(t)) = \frac 1{b_n} \sum_{k\in [1,n]^r} E(\hat{\nu}_k^{\omega})(t) 
= \frac 1{b_n} \sum_{k\in [1,n]^r}  \int e^{i<t,u>} E\nu_k(du),
\] 
and  let $\hat{D}_{n}(t)=\hat{K}_n(t)-\hat{E}_n(t) $ 
denote the corresponding centered kernels.
Lastly, for $\{n_k\}_{k\in \mathbb N^r} \subset {\mathbb R}^d$,  $A_nf=(1/n^r) \sum_{k\in [1,n]^r} T_{n_k}f(x)$ denote the regular averages along the sequence $\{n_k\}_{k\in\mathbb N^r}$.

%
%
%

\begin{note} \label{representation}	
A family of operators $\{T_t\}_{(\mathbb R^+)^d}$ acting on $L^2(X)$ is a continuous semi--flow of isometries, or a $(\mathbb R^+)^d$--action, if for all $f\in L^2$, $T_0f=f$ , $T_s T_t f = T_{s+t}f$ and $s,t \in (\mathbb R^+)^d$, and the map $t\to <T_tf,f>$ is continuous.

Spectral representation of unitary actions allow us to study random averages of the form $K_nf=\frac 1{b_n} \sum_{k\in [1,n]^r} \nu^{\omega}_kf$ by exploiting properties of the kernels $\hat{K}_n(t)=\frac 1{b_n} \sum_{k\in [1,n]^r} \hat{\nu}^{\omega}_k$. 

For unitary actions of $\mathbb R^d$,  a classical generalization of Stone's Theorem \cite{RN} provides a spectral representation. That is, 
for any $f\in L^2$, there is a positive finite measure $\mu_f$ on $\mathbb R^d$, called its spectral measure,  such that for any $s \in \mathbb R^d$, $<T_sf,f>=\int_{\mathbb R^d} e^{i<s,t>} d\mu_f(t)$.

The unitary dilation theorem implies that results obtained for unitary actions yield results for actions by isometries as well.
\begin{thm} Riesz--Nagy \cite{RN} \label{thm:dilation}
Let $\{T_t\}_{t\in (\mathbb R^+)^d}$ be a representation of  $(\mathbb R^+)^d$ by isometries on a Hilbert space H. There exist a Hilbert space $H_2 \supset H$ and an $\mathbb R^d$ action $\{U_t\}_{t\in \mathbb R^d}$  by unitary operators on $H_2$, such that, if $\pi:H_2\to H$ is the orthogonal projection, then $\pi U_t x=T_tx$, for all $t\in (\mathbb R^+)^d$ and $x\in H$.
\end{thm}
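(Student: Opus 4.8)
The statement is the classical unitary dilation theorem for an isometric representation of the cone $(\mathbb R^+)^d$, and the plan is to prove the slightly stronger assertion that the dilation is in fact an \emph{extension}: I will construct a Hilbert space $H_2 \supseteq H$ and a strongly continuous unitary representation $\{U_t\}_{t\in \mathbb R^d}$ with $U_t x = T_t x$ for every $x\in H$ and every $t\in (\mathbb R^+)^d$. Since $T_t x \in H$, this immediately gives $\pi U_t x = U_t x = T_t x$, so the extension implies the dilation exactly as worded. The construction is a direct-limit (GNS-type) argument resting on three structural facts about $\{T_t\}$: each $T_t$ is an isometry, the operators commute because $(\mathbb R^+)^d$ is abelian, and $(\mathbb R^+)^d$ is a directed cone generating the group $\mathbb R^d$ (every $t\in\mathbb R^d$ is $t=s-s'$ with $s,s'\in(\mathbb R^+)^d$).

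First I would build $H_2$ as a completion of formal ``backward translates''. Consider pairs $(s,x)$ with $s\in(\mathbb R^+)^d$ and $x\in H$, thought of as the formal vector $T_s^{-1}x$, and define a pre-inner product by comparing at a common level: for $u\succeq s,s'$ (which exists since the cone is directed) set $\langle (s,x),(s',x')\rangle = \langle T_{u-s}x,\, T_{u-s'}x'\rangle_H$. The isometry of the $T_t$ makes this independent of the choice of $u$, and $(s,x)\sim(s',x')$ precisely when $T_{u-s}x = T_{u-s'}x'$; let $H_2$ be the completion of the resulting inner product space. The map $\iota(x)=[(0,x)]$ embeds $H$ isometrically, and I identify $H$ with $\iota(H)$, taking $\pi$ to be the orthogonal projection onto it.

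Next I would define the action. For $t\in\mathbb R^d$ choose any $v\in(\mathbb R^+)^d$ with $v+t\in(\mathbb R^+)^d$ and set $U_t[(s,x)] = [(s+v,\, T_{v+t}x)]$, the formal identity being $T_t\,T_s^{-1}x = T_{v+t}\,T_{v+s}^{-1}x$. I would then verify, in order: (i) $U_t$ is independent of $v$ and well defined on equivalence classes; (ii) $U_t$ preserves the inner product, hence extends to an isometry of $H_2$; (iii) $U_{t_1}U_{t_2}=U_{t_1+t_2}$ and $U_0=\mathrm{Id}$, so each $U_t$ is invertible, i.e. unitary; and (iv) for $t\succeq 0$ one may take $v=0$, giving $U_t\iota(x)=\iota(T_tx)$, the desired extension identity. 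All four reduce to the semigroup law $T_aT_b=T_{a+b}$ together with cancellation of isometries, carried out on the dense set of generators.

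The main obstacle is not a single hard estimate but the bookkeeping that makes the construction coherent: checking that $U_t$ is genuinely well defined — independent both of the representative $(s,x)$ and of the auxiliary $v$ — and that $t\mapsto U_t$ is a homomorphism rather than merely a family of isometries. The remaining analytic point is strong continuity: to qualify as an $\mathbb R^d$-action in the sense of Note \ref{representation}, $t\mapsto\langle U_t f,g\rangle$ must be continuous. On generators this inner product equals $\langle T_{u-s+t}x,\,T_{u-s'}x'\rangle_H$ for $u$ large, which is continuous in $t$ by the weak continuity hypothesis on the semiflow; since $\|U_t\|=1$, uniform boundedness upgrades this to weak, hence strong, continuity on all of $H_2$. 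Finally I would note that $H_2$ is spanned by $\{U_{-s}\iota(x)\}$, so the dilation is automatically minimal, although minimality is not needed for the statement.
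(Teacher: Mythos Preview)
Your sketch is correct: the inductive-limit construction you outline is the standard way to build the minimal unitary extension of a commuting family of isometries indexed by a directed cone, and the verifications you list (well-definedness of the pre-inner product and of $U_t$, the homomorphism property, and weak continuity via polarization and density) all go through as you indicate. You are also right that for isometries the dilation is in fact an extension, so that $\pi U_t x = U_t x = T_t x$ on $H$.

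Note, however, that the paper does not supply its own proof of this theorem: it is stated as a classical result from Riesz and Sz.-Nagy \cite{RN} and used as a black box to pass from isometric semiflows to unitary $\mathbb R^d$-actions, where the spectral representation of Note \ref{representation} is available. There is thus no proof in the paper to compare your argument against; you have simply filled in what the authors take for granted.
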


\end{note}


Define $(\log\psi)^{+}(t)=\log_2 \psi (t)$ if $\psi(t)>2$, and $1$ otherwise. For any positive real number  $v$, abusing notation $K_vf=K_{\lfloor v\rfloor }f$.

\begin{thm} \label{thm:estimate.for.D.1}
Let $\{\nu_k\}_{k\in \mathbb N^r}$ be a sequence of independent, finite complex valued transition measures on $\Omega\times {\cal B}$. 
Let $\varphi$ be an non--decreasing positive function such that $\varphi(|x|) \gtrsim |x|$, and let $\psi$ be its (generalized)  inverse. 
Assume that 
\begin{align*} 
\mbox{(a)}&\quad \sum_{k\in \mathbb N^r} E|\nu_k|(|t|>\varphi(|k|)) <\infty,  
\qquad \mbox{and} \\
\mbox{(b)}& \quad 
\sum_{n\ge 1} \frac{ 1+ V_{\rho^n}  \log( \varphi(\rho^{ n}))  }{b_{\rho^n}^2}<\infty, \mbox{ for all } \rho>1.
\label{eqn1} 
\end{align*}
Then, there exists $C(\omega)\in L^1(\Omega)$ such that
 for any probability space $(X,{\cal D},m)$ with any continuous semi--flow of isometries $\{T_t\}_{t\in \mathbb (R^+)^d}$ acting on it, if $f\in L^2(X)$, 
\[\Bigg\|  \sqrt{ \sum_{n=1}^{\infty} | D_{\rho^n}f |^2 }  \Bigg\|^2_2<  C(\omega)  \, c(\rho) \, \int  (\log \psi)^+(|t|) d \mu_f. \]
\end{thm}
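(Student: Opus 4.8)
The plan is to estimate the $L^2(\Omega\times X)$ norm of the square function of the centered kernels $\hat D_{\rho^n}$ using the uniform maximal estimate from Proposition~\ref{prop:Main}, and then to transfer it to an estimate on the averages $D_{\rho^n}f$ via the spectral representation of Note~\ref{representation}. First I would invoke Theorem~\ref{thm:dilation} to reduce to the case of a unitary $\mathbb{R}^d$--action, so that every $f\in L^2(X)$ has a spectral measure $\mu_f$ with $\langle T_s f,f\rangle=\int_{\mathbb{R}^d} e^{i\langle s,t\rangle}\,d\mu_f(t)$. Consequently $\|D_{\rho^n}f\|_2^2=\int_{\mathbb{R}^d} |\hat D_{\rho^n}(\omega,t)|^2\,d\mu_f(t)$, and summing in $n$,
\[
\Bigg\| \sqrt{\sum_{n\ge1}|D_{\rho^n}f|^2}\Bigg\|_2^2
=\int_{\mathbb{R}^d}\Big(\sum_{n\ge1}|\hat D_{\rho^n}(\omega,t)|^2\Big)\,d\mu_f(t).
\]
So it suffices to bound $\sum_{n\ge1}|\hat D_{\rho^n}(\omega,t)|^2$ pointwise in $t$ by $C(\omega)\,c(\rho)\,(\log\psi)^+(|t|)$.

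Next I would apply Proposition~\ref{prop:Main} with the coefficients $a_k = \|\,|\nu_k|(.,\mathbb{R}^d)\|_{L^\infty(\Omega)}/b_{?}$ absorbed appropriately — more precisely, working with $N_{n,m}$ and the raw centered sums $\sum_{k\in N_{n,m}} (\hat\nu_k^\omega(t)-\hat{E\nu}_k(t))$, hypothesis (a) is exactly the summability condition needed, so there is $C(\omega)\in L^1(\Omega)$ with
\[
\sup_{m>n}\sup_{T\ge2}\ \max_{t\in[-T,T]^d}
\frac{\big|\sum_{k\in N_{n,m}}(\hat\nu_k^\omega(t)-\hat{E\nu}_k(t))\big|^2}
{1+V_{n,m}\log(\max(\varphi(m),T))}\le c(d)\,C(\omega).
\]
Specializing to a single block $N_{0,\rho^n}$ and to $|t|\le T$ with $T\asymp \max(2,\psi(|t|))$, this gives
\[
|\hat D_{\rho^n}(\omega,t)|^2=\frac{1}{b_{\rho^n}^2}\Big|\sum_{k\in[1,\rho^n]^r}(\hat\nu_k^\omega(t)-\hat{E\nu}_k(t))\Big|^2
\lesssim c(d)\,C(\omega)\,\frac{1+V_{\rho^n}\log(\max(\varphi(\rho^n),\psi(|t|)))}{b_{\rho^n}^2}.
\]
Here I am using that $\varphi$ is non-decreasing with generalized inverse $\psi$, so $\log(\max(\varphi(\rho^n),\psi(|t|)))\le \log\varphi(\rho^n)+(\log\psi)^+(|t|)$ up to constants.

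Then I would split the sum over $n$ into two ranges, according to whether $\varphi(\rho^n)\ge \psi(|t|)$ or not — equivalently whether $\rho^n$ is above or below $\psi(|t|)$. For the range $\rho^n \le \psi(|t|)$ (the "small $n$" terms, of which there are $O(\log\psi(|t|))$ many), I bound each term crudely using the trivial estimate $|\hat D_{\rho^n}(\omega,t)|\le (\text{something } \lesssim 1)$ coming from $a_k\le \|\,|\nu_k|\,\|_{L^\infty}$, so their contribution is $\lesssim (\log\psi)^+(|t|)$; alternatively I feed $T=2$ into the estimate above. For the range $\rho^n > \psi(|t|)$, the $\log(\max(\cdot))$ factor is just $\log\varphi(\rho^n)$, and summing gives $\sum_n (1+V_{\rho^n}\log\varphi(\rho^n))/b_{\rho^n}^2$, which is finite by hypothesis (b); this contributes a constant $c(\rho)C(\omega)$ with no $t$-dependence, hence is $\le c(\rho)C(\omega)(\log\psi)^+(|t|)$ since $(\log\psi)^+\ge 1$. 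Adding the two ranges and integrating against $d\mu_f$ gives the claimed inequality. The main obstacle I anticipate is bookkeeping the truncation threshold $T$ as a function of $t$ so that the single-$\omega$ maximal bound from Proposition~\ref{prop:Main} can be applied uniformly in $n$ and $t$ with the same $C(\omega)$; in particular one must be careful that the $\sup_{T\ge2}$ in that proposition is what licenses choosing $T\asymp\psi(|t|)$ without losing the $L^1(\Omega)$ control of the constant, and that the "small $n$" terms really are controlled by $(\log\psi)^+(|t|)$ rather than by the number of blocks, which is where the geometric spacing $\rho^n$ (rather than all integers) is essential.
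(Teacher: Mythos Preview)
Your approach is essentially the paper's: both use the spectral representation to reduce to bounding $\sum_n |\hat D_{\rho^n}(\omega,t)|^2$, then split according to whether $|t|\le \varphi(\rho^n)$ (equivalently $\rho^n\ge \psi(|t|)$), applying Proposition~\ref{prop:Main} on that range and summing via hypothesis~(b), while on the complementary range invoking only the trivial bound $|\hat D_n(t)|\le 2$ and counting the $O((\log\psi)^+(|t|)/\log\rho)$ values of $n$. One small slip to fix: your choice $T\asymp \psi(|t|)$ does not guarantee $|t|\le T$ (since $\varphi(x)\gtrsim x$ forces $\psi(y)\lesssim y$); take instead $T=\max(2,|t|)$, or simply $T=\varphi(\rho^n)$ on the large-$n$ range as the paper does, so that the displayed bound reads $\log(\max(\varphi(\rho^n),|t|))$ rather than $\log(\max(\varphi(\rho^n),\psi(|t|)))$, after which your argument goes through unchanged.
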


\begin{proof} 
\begin{align}
\int  \sum_{n=1}^{\infty} |D_{\rho^{n}}f |^2 dm  &
\le  \sum_{n=1}^{\infty}  \int |\hat{D}_{\rho^{n}}(t)|^2 \, d\mu_f   \nonumber  \\
& = \sum_{n=1}^{\infty}  \int_{|t| \le \varphi({\rho^{n}} ) } |\hat{D}_{\rho^{n}}(t)|^2 \, d\mu_f  + \sum_{n=1}^{\infty}  \int_{|t|> \varphi({\rho^{n}})  }  |\hat{D}_{\rho^{n}}(t)|^2 \, d\mu_f  \nonumber \\
	& = \mbox{I} + \mbox{II} . \label{***}
	\end{align}

 By assumption (a) and Proposition \ref{prop:Main}, 
 there exist $C:\Omega\to \mathbb R^+$, finite P--a.e., such that
 \[\max_{|t|\le \varphi(\rho^n)} |\hat{D}_{\rho^{n}}(t)|^2 \lesssim C(\omega) \frac{1+V_m \log(\varphi(\rho^n))}{b^2_{\rho^n}}.\]
 Therefore, by assumption (b),
 \[ I \lesssim C(\omega) \sum_{n=1}^{\infty}  \frac{1+V_m \log(\varphi(\rho^n))}{b^2_{\rho^n}} \|f\|^2_2
 \lesssim c(\rho) C(\omega) \|f\|^2_2.\]


 For the second term in (\ref{***}), note that $\sup_n \sup_t |\hat{D}_n(t)|$ is bounded. 
 \begin{align*}
\mbox{ II} & = \sum_{n=1}^{\infty} \sum_{k\ge n} \int_{\varphi(\rho^{k}) <|t|\le \varphi(\rho^{(k+1)}) } |\hat{D}_{\rho^n}(t)|^2 d \mu_f 
  \le C \sum_{k=1}^{\infty} k \int_{\varphi(\rho^{k}) <|t|\le \varphi(\rho^{(k+1)}) } d \mu_f \\
  & \le \frac C{ \log_2 \rho} \sum_{k=1}^{\infty}  \int_{\varphi(\rho^{k}) <|t|\le \varphi(\rho^{(k+1)}) }  \log_2 \psi(|t|) d \mu_f 
   \lesssim \frac C{ \log_2 \rho} \int (\log\psi)^{+} |t| d \mu_f.
  \end{align*} 
  Combining the estimates for both terms I and II, the proposition is proven.  \hfill $\square$ 
\end{proof}

\begin{prop} \label{prop:estimate.for.D} 
Let $\{\nu_k\}_{k\in \mathbb N^r}$ be a sequence of independent,  probability transition measures on $\Omega\times \cal B$ such that, for some constant $c>0$,
\[\sum_{{k\in \mathbb N^r}} E\nu_k(|t|>c2^{|k|^{r\beta }})<\infty. \]
Let $K_n$ be defined as in (\ref{eq:Kn}) and $D_n$ the corresponding centered averages. 
Then 
there exists $C(\omega)\in L^1(\Omega)$ such that,
 for any probability space $(X,{\cal D},m)$ with any continuous semi--flow of isometries $\{T_t\}_{t\in \mathbb (R^+)^d}$ acting on it, if $f\in L^2(X)$, 
\[\Bigg\|  \sqrt{ \sum_{n=1}^{\infty} | D_{\rho^n}f |^2 }  \Bigg\|^2_2<  C(\omega) \, c(\rho,\beta) \, \int (\log\log)^{+} |t| d \mu_f. \]

\end{prop}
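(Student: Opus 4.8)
The plan is to deduce Proposition \ref{prop:estimate.for.D} from Theorem \ref{thm:estimate.for.D.1} by making the right choice of the auxiliary function $\varphi$. Since the $\nu_k$ are probability transition measures, we have $b_m=V_m=m^r$, and the hypothesis $\sum_{k\in\mathbb N^r} E\nu_k(|t|>c\,2^{|k|^{r\beta}})<\infty$ is exactly condition (a) of Theorem \ref{thm:estimate.for.D.1} with $\varphi(x)=c\,2^{x^{r\beta}}$ (more precisely $\varphi(|x|)=(2+c)\,2^{|x|^{r\beta}}$ to guarantee $\varphi(|x|)\gtrsim|x|$ and $\varphi>2$, as in Example \ref{ex:2}). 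So the first step is just to record that this $\varphi$ is admissible: it is non-decreasing, positive, and dominates $|x|$.

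The second step is to verify condition (b), namely $\sum_{n\ge 1} \frac{1+V_{\rho^n}\log(\varphi(\rho^n))}{b_{\rho^n}^2}<\infty$ for every $\rho>1$. With $b_{\rho^n}=V_{\rho^n}=\lfloor\rho^n\rfloor^r$ and $\log\varphi(\rho^n)\sim \rho^{nr\beta}$, the $n$-th term is comparable to $\rho^{-nr}+\rho^{-nr}\cdot\rho^{nr\beta}=\rho^{-nr}+\rho^{-nr(1-\beta)}$. Since $0<\beta<1$ gives $r(1-\beta)>0$, both geometric series converge, and their sum contributes the constant $c(\rho,\beta)$. This is the routine computation that makes the hypothesis $\beta<1$ essential.

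The third step is to identify the generalized inverse $\psi$ of $\varphi$ and compute $(\log\psi)^+$. Inverting $s=(2+c)2^{x^{r\beta}}$ gives $x=\big(\log_2(s/(2+c))\big)^{1/(r\beta)}$, so $\psi(s)\sim (\log_2 s)^{1/(r\beta)}$ for large $s$, and hence $\log_2\psi(s)\sim \frac{1}{r\beta}\log_2\log_2 s$. Therefore $(\log\psi)^+(|t|)\lesssim c(\beta)\,(\log\log)^+|t|+O(1)$, and $\int(\log\psi)^+(|t|)\,d\mu_f\le c(\beta)\int(\log\log)^+|t|\,d\mu_f+c(\beta)\|f\|_2^2$ (the additive constant being harmless since under the stated integrability hypothesis on $f$ this is all finite). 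Feeding this bound into the conclusion of Theorem \ref{thm:estimate.for.D.1} yields
\[\Big\|\sqrt{\textstyle\sum_{n=1}^\infty |D_{\rho^n}f|^2}\,\Big\|_2^2 < C(\omega)\,c(\rho,\beta)\int(\log\log)^+|t|\,d\mu_f,\]
which is exactly the claimed estimate, with $c(\rho,\beta)$ absorbing the $c(\rho)$ from Theorem \ref{thm:estimate.for.D.1} and the $c(\beta)$ from the inversion.

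I do not expect a serious obstacle here; the content is entirely in the earlier Theorem \ref{thm:estimate.for.D.1} and this proposition is its specialization to the geometrically-growing truncation scale $\varphi(x)=c\,2^{x^{r\beta}}$. The only point requiring a little care is the bookkeeping around the generalized inverse and the passage from $(\log\psi)^+$ to $(\log\log)^+$, together with checking that the additive $O(1)$ terms (from $\varphi>2$ thresholds and from $\psi$'s behavior on bounded sets) are swallowed by the constants without needing extra hypotheses on $f$ beyond $\int(\log\log)^+|t|\,d\mu_f<\infty$.
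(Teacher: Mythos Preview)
Your proposal is correct and follows essentially the same approach as the paper: choose $\varphi(x)=c\,2^{x^{r\beta}}$, use $b_m\sim V_m\sim m^r$ to reduce condition (b) of Theorem \ref{thm:estimate.for.D.1} to the convergent geometric series $\sum_n \rho^{-nr(1-\beta)}$, and identify $(\log\psi)^+$ with $(\log\log)^+$ up to constants depending on $\beta$. The paper's proof is terser about the inversion step, but your more explicit treatment of $\psi$ and the absorption of the $O(1)$ term (via $(\log\log)^+\ge 1$ so that $\|f\|_2^2\le\int(\log\log)^+|t|\,d\mu_f$) is a welcome clarification rather than a deviation.
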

\begin{proof}
Since $\{\nu_k\}_{k\in \mathbb N^r}$ are probability transition measures, $b_m\sim m^r$ and $V_m\sim m^r$.  $\varphi(x)=c\ 2^{x^{r\beta}}$, for $x\ge 0$, has inverse $(\log_2 (y/c))^{1/(r\beta)}$, $y\ge 1$. It suffices to check the assumptions of Theorem \ref{thm:estimate.for.D.1}.
Assumption (b)  is immediately satisfied for $\beta\in (0,1)$.
\[ \sum_{n\ge 1} \frac{ 1+V_{\rho^n}  \log( \varphi(\rho^{ n}))  }{b_{\rho^n}^2} \sim
\sum_{n\ge 1} \frac{ {\rho^{rn}}  \log( 2^{\rho^{r\beta n}}  )}{\rho^{2rn}} = \sum_{n\ge 1} \frac{1}{  \rho^{(1-\beta) rn}  } < \infty\]
for all $\rho>1$. And assumption (a) becomes the assumption of this proposition.
\hfill $\square$ 
\end{proof}
  
\begin{example} \label{ex:estimate.for.D} Given a sequence $\{\theta_k\}_{k\in \mathbb N^r}$ of independent probability transition measures and $\{n_k\}_{k\in \mathbb N^r}\subset (\mathbb R^+)^d$. 
If 
\begin{align*}
\mbox{(a) } & \quad  |n_k|=O(2^{|k|^{r\beta}})  \mbox{ and }\\
\mbox{(b) } & \quad \mbox{for some constant } c\ge 1,
\sum_{{k\in \mathbb N^r}} E\theta_k(|t|>c \ 2^{|k|^{r\beta}})<\infty.
\end{align*} 
Then the sequence $\nu_k=\theta_k*\delta_{n_k}$ satisfies the condition of Proposition \ref{prop:estimate.for.D}. In particular, if $\{\theta_k\}_{k\in \mathbb N^r}$ are independent copies of one probability transition measure $\theta$, condition (b) becomes
$\sum_{{n\in \mathbb N}} n^{r-1} E\theta(|t|>c \ 2^{n^{r\beta}})<\infty$.
\end{example}

After the square function result along exponential subsequences $\{\rho^n\}$ is obtained, Theorem \ref{thm:estimate.for.D.1} or Proposition  \ref{prop:estimate.for.D}, one is tempted to prove a variational inequality for averages with kernels $\hat{K}_n$. But without additional control on the decay of the kernels, this approach fails to be fruitful. However, this result is enough for universal pointwise convergence of the averages. The steps for proving convergence were inspired by \cite{WirdlRos} and related works.

\begin{defi} Given a set of real numbers $\{x_n\}_{n\in I}$,
where $I$ is a countable index set, define its variation $s$--norm by
$\|x_n\|_{v(s)}= \sup \big(\sum_{j=1}^{\infty} |x_{n_j} - x_{n_{j+1}}|^s\big)^{1/s}$
where the supremum is taken over all possible sub sequences $\{n_j\}$ in $I$.
\end{defi}

\begin{prop}\label{prop:properties.var}(Properties of variation norms)\begin{enumerate}
\item For each $1\le s< \infty$, $\| . \|_{v(s)}$ is a semi-norm.
\item $\|x_n\|_{v(s)} \le 2 \big(\sum_{n=1}^{\infty} |x_n|^s \big)^{1/s}$.
\item $\|x_n\|_{v(s)} \le 2 \sum_k \|\{x_n : n_k \le n < n_{k+1} \} \|_{v(s)}$
for any sequence $n_k$ such that $x_{n_k} = 0$ for all $k$.
\end{enumerate}
\end{prop}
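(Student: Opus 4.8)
The plan is to deduce all three statements from Minkowski's inequality in $\ell^s$, reserving a short combinatorial argument for part (3).

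For (1) I would note that nonnegativity is trivial and homogeneity $\|\lambda x_n\|_{v(s)}=|\lambda|\,\|x_n\|_{v(s)}$ follows by pulling $|\lambda|$ out of each difference; for the triangle inequality, fix a subsequence $\{n_j\}$ of $I$ and apply Minkowski in the index $j$ to the identity $(x_{n_j}+y_{n_j})-(x_{n_{j+1}}+y_{n_{j+1}})=(x_{n_j}-x_{n_{j+1}})+(y_{n_j}-y_{n_{j+1}})$, which bounds the $\ell^s$-norm of the differences of $x+y$ along $\{n_j\}$ by $\|x_n\|_{v(s)}+\|y_n\|_{v(s)}$; the supremum over $\{n_j\}$ then gives the claim, and it is only a semi-norm since constant sequences have vanishing variation. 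For (2) I would again fix $\{n_j\}$, use $|x_{n_j}-x_{n_{j+1}}|\le|x_{n_j}|+|x_{n_{j+1}}|$ together with Minkowski, and observe that each of $\sum_j|x_{n_j}|^s$ and $\sum_j|x_{n_{j+1}}|^s$ is a sum over distinct indices of $I$, hence at most $\sum_n|x_n|^s$; the supremum over $\{n_j\}$ produces the factor $2$.

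For (3) I would set $V_k=\|\{x_n:n_k\le n<n_{k+1}\}\|_{v(s)}$ and decompose $x_n=\sum_k y^{(k)}_n$, where $y^{(k)}_n=x_n$ on the block $[n_k,n_{k+1})$ and $0$ elsewhere (assuming, as in the applications, that the blocks exhaust $I$). Since any finite subsequence of $I$ lies in finitely many blocks, on which $x$ agrees with the corresponding partial sum of the $y^{(k)}$, iterating (1) and passing to the supremum over finite subsequences would give $\|x_n\|_{v(s)}\le\sum_k\|y^{(k)}_n\|_{v(s)}$, so it suffices to show $\|y^{(k)}_n\|_{v(s)}\le2^{1/s}V_k$. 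Here I would use that, for any subsequence $\{m_j\}$, the indices falling in $[n_k,n_{k+1})$ form a single contiguous run $m_a,\dots,m_b$ while $y^{(k)}$ vanishes at the rest, so that $\sum_j|y^{(k)}_{m_j}-y^{(k)}_{m_{j+1}}|^s$ reduces to the interior differences $\sum_{a\le i<b}|x_{m_i}-x_{m_{i+1}}|^s$ plus at most one ``entry'' term $|x_{m_a}|^s$ and at most one ``exit'' term $|x_{m_b}|^s$. Because $x_{n_k}=0$ and $n_k\le m_a\le\cdots\le m_b<n_{k+1}$, the subsequence $n_k,m_a,\dots,m_b$ stays inside the block, so the entry term together with the interior differences is $\le V_k^s$, while the exit term $|x_{m_b}-x_{n_k}|^s\le V_k^s$ on its own; the total is $\le2V_k^s$. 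Taking $s$-th roots, then the supremum, then summing over $k$ (using $2^{1/s}\le2$) completes the proof.

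I expect the only step requiring real care to be (3): one has to notice that a monotone subsequence meets each block in a single run, so there is at most one entry and one exit crossing per block, and that only the left endpoint $n_k$ of a block is a guaranteed zero of $x$ — this asymmetry is precisely what forces the constant $2$ rather than $1$. I would also flag the standing assumption that the $n_k$ begin at the least index of $I$, so that the blocks cover $I$; this is automatic wherever the proposition is applied.
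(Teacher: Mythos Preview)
Your argument is correct. Parts (1) and (2) are routine, and your treatment of (3) is clean: the block decomposition $x=\sum_k y^{(k)}$, the reduction to finite subsequences to justify the countable triangle inequality, and the entry/interior/exit analysis giving $\|y^{(k)}\|_{v(s)}\le 2^{1/s}V_k$ all go through. The one implicit assumption you rely on---that the admissible subsequences in the definition of $\|\cdot\|_{v(s)}$ are monotone, so that each block is hit in a single contiguous run---is indeed the standard convention and is the one used in the applications here; you were right to flag it.

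As for comparison with the paper: the paper does not actually prove this proposition. It simply states the three properties and refers the reader to Jones--Kaufman--Rosenblatt--Wierdl \cite{JKRW} for a proof. So there is no in-paper argument to weigh yours against; your write-up is a self-contained substitute for that citation. If anything, your bound in (3) is slightly sharper than stated, since you obtain the constant $2^{1/s}$ rather than $2$.
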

A proof of these properties can be found in \cite{JKRW} along with some discussion
of the applications of the variation norm to ergodic theory.

\begin{prop} \label{prop:conv} With the same assumptions and notation as in Theorem \ref{thm:estimate.for.D.1}, 
if for any $\rho>1$, $\| \|E_{\rho^n}f\|_{v(s)} \|_2 \le C \|f\|_2$ ($s>2$), then there is a universal set $\Omega' \subset \Omega$ of probability 1 such that, for any $\omega \in \Omega'$, 
\begin{enumerate}
\item [(a)]
for any probability space $(X,{\cal D},m)$ with any continuous semi--flow of isometries $\{T_t\}_{t\in \mathbb (R^+)^d}$ acting on it,
 $\lim_{n\to\infty} K^{\omega}_{\rho^n}f(x)$ exists $m$--almost everywhere for any $f\in L^2(X)$ such that $\int  (\log\psi)^{+} |t| d \mu_f <\infty$.
 \item[(b)]
 If in addition, for any $\rho>1$, (a)  $\lim_{\rho\mapsto 1} \sup_n b_{\rho^{n+1}}/b_{\rho^n}=1$,
 for any probability space $(X,{\cal D},m)$ with any continuous semi--flow of positive isometries $\{T_t\}_{t\in \mathbb (R^+)^d}$ acting on it,
 the averages $K^{\omega}_nf$ converge almost everywhere for any $f\in L^2(X)$, such that $\int (\log\psi)^{+} |t| d \mu_f<\infty$.
 \end{enumerate}
\end{prop}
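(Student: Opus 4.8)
The plan is to deduce pointwise convergence along the exponential subsequence $\{\rho^n\}$ from the square-function estimate of Theorem \ref{thm:estimate.for.D.1} together with the hypothesized variational inequality for the expectation averages $E_{\rho^n}$, and then to fill in the gaps between consecutive exponential times. First I would fix $\rho>1$ and write $K^{\omega}_{\rho^n}f = E_{\rho^n}f + D_{\rho^n}f$. For the ``deterministic'' piece, the assumed bound $\| \|E_{\rho^n}f\|_{v(s)}\|_2 \le C\|f\|_2$ shows in particular that $E_{\rho^n}f$ is $m$-a.e. Cauchy (a finite $v(s)$ norm forces convergence of the sequence), hence $\lim_n E_{\rho^n}f$ exists a.e. for every $f\in L^2(X)$. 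For the ``random'' piece, Theorem \ref{thm:estimate.for.D.1} gives, on a set $\Omega'$ of full probability, $\big\| \sqrt{\sum_n |D_{\rho^n}f|^2}\big\|_2^2 \le C(\omega)\,c(\rho)\int(\log\psi)^+(|t|)\,d\mu_f < \infty$ whenever $\int(\log\psi)^+(|t|)\,d\mu_f<\infty$; since the sum $\sum_n |D_{\rho^n}f(x)|^2$ is then finite $m$-a.e., the terms $D_{\rho^n}f(x)\to 0$ a.e. Adding the two pieces proves part (a). A routine measure-theoretic point to handle carefully is the dependence of $\Omega'$ on $\rho$: I would run the argument for a countable dense set of $\rho$'s (e.g. $\rho\in\mathbb Q\cap(1,\infty)$ and later $\rho=1+1/m$) and intersect, so that $\Omega'$ is genuinely universal.

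For part (b) the task is to control $\sup_{\rho^n \le N < \rho^{n+1}} |K^{\omega}_N f - K^{\omega}_{\rho^n} f|$ and show it is negligible as $n\to\infty$ once we also let $\rho\downarrow 1$. Here I would use positivity of the semi-flow (this is where the hypothesis ``positive isometries'' enters) and the fact that $b_n$ is the normalization: for $\rho^n \le N < \rho^{n+1}$, writing $K^{\omega}_N f = \frac{1}{b_N}\sum_{k\in[1,N]^r}\nu_k^\omega f$ and comparing with $K^{\omega}_{\rho^n}f$, the difference is, up to the factor $b_{\rho^{n+1}}/b_{\rho^n}$ which by assumption (a) of (b) tends to $1$ as $\rho\downarrow 1$ uniformly in $n$, controlled by averages of $\nu_k^\omega |f|$ over the short blocks $[1,N]^r\setminus[1,\rho^n]^r$. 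Applied to $|f|$ in place of $f$ and using positivity, this ``oscillation between exponential times'' is dominated by $(b_{\rho^{n+1}}/b_{\rho^n} - 1)$ times a maximal-type quantity that is itself a.e. finite (it is bounded by $\sup_m K^{\omega}_m|f|$, and along the exponential subsequence the latter converges a.e. by part (a), hence is a.e. bounded). Letting $\rho\downarrow 1$ through $\rho=1+1/m$ makes this error term go to $0$, and combined with the a.e. convergence of $K^{\omega}_{\rho^n}f$ from part (a) (whose limit must then be independent of $\rho$), we get a.e. convergence of the full sequence $K^{\omega}_n f$.

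The main obstacle I anticipate is the block-comparison step in part (b): making rigorous that the difference between $K^{\omega}_N f$ and $K^{\omega}_{\rho^n}f$ for $N$ in a dyadic-type block is uniformly (in $n$) small as $\rho\downarrow 1$, without a variational or maximal inequality for the random averages $K^{\omega}_n$ themselves. The resolution is to avoid needing such an inequality by exploiting positivity to reduce everything to the single maximal bound $\sup_{\rho^n\le N<\rho^{n+1}} K^{\omega}_N|f|(x) \le \frac{b_{\rho^{n+1}}}{b_{\rho^n}} K^{\omega}_{\rho^{n+1}}|f|(x)$ (a one-line consequence of positivity and monotonicity of $b_m$), after which the a.e. finiteness of $\lim_n K^{\omega}_{\rho^n}|f|$ from part (a) does all the work; the hypothesis $\lim_{\rho\to 1}\sup_n b_{\rho^{n+1}}/b_{\rho^n}=1$ is exactly what is needed to close the estimate. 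A secondary, purely bookkeeping obstacle is organizing the countable intersection of full-measure sets over $\rho$ so the final $\Omega'$ does not depend on $(X,\mathcal D,m)$, $\{T_t\}$, or $f$ — this is handled by the spectral/transfer representation in Note \ref{representation}, which reduces the universal statement to the single estimate on kernels already established.
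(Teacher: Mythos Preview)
Your proposal is correct and follows essentially the same route as the paper. The only organizational difference is in part (a): the paper adds the square-function bound for $D_{\rho^n}$ to the assumed variational bound for $E_{\rho^n}$ to obtain a single variational inequality $\|\,\|K_{\rho^n}f\|_{v(s)}\|_2 \lesssim [\int(\log\psi)^+(|t|)\,d\mu_f]^{1/2}$ and then argues convergence by contradiction, whereas you handle the two pieces separately (finite $v(s)$ norm $\Rightarrow$ Cauchy for $E_{\rho^n}f$, and $\sum_n|D_{\rho^n}f|^2<\infty$ a.e.\ $\Rightarrow D_{\rho^n}f\to 0$). Both are equivalent and use the same ingredients. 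For part (b) the paper does exactly what you describe: pick $\rho_k\downarrow 1$, intersect the full-measure sets, and for $f\ge 0$ sandwich $K_m^\omega f$ between multiples $b_{\rho_k^n}/b_{\rho_k^{n+1}}$ and $b_{\rho_k^{n+1}}/b_{\rho_k^n}$ of the adjacent exponential averages, using positivity; the general $f$ follows by splitting into positive and negative parts.
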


\begin{proof} $\mbox{ }$\\
{\sc\bf Part (a):} 
By Theorem \ref{thm:estimate.for.D.1}, there exists a set
 $\Omega' \subset \Omega$ of probability 1 such that, for any $\omega \in \Omega'$, there is a constant $M=M(\omega,\rho,\varphi)$ such that 
\[  \Big\| \sqrt{\sum_n |D_{\rho^n} f|^2} \Big\|_2 \le M  \Big[\int (\log\psi)^{+} |t| d \mu_f \Big]^{1/2}.\]
Let  $\omega\in \Omega'$. Then  
\begin{align*}
\| \|K_{\rho^n}f\|_{v(s)} \|_2  & \le \| \|E_{\rho^n}f\|_{v(s)} \|_2  + \| \|D_{\rho^n} f\|_{v(s)} \|_2 \\
& \le C \|f\|_2 + 2 \Big\| \sqrt{\sum_n |D_{\rho^n} f|^2} \Big\|_2 \\
& \le (C+2M) \Big[\int (\log\psi)^{+} |t| d \mu_f \Big]^{1/2}.
\end{align*}
Suppose that for some $\omega \in \Omega'$, there exists $f\in L^2(X)$ with
$\int (\log\psi)^{+} |t| d \mu_f <\infty$, for which
 the limit does not exists. Then there exist positive numbers $a$ and $b$ and a set $E\subset X$, with $m(E)>b>0$, such that, for all $x\in E$, 
\[\left|\limsup_j K_{\rho^{n_j}}f(x)	- \liminf_j K_{\rho^{n_j}}f(x)\right|	>a>0.\]
Then, for any $x\in E$, there is an increasing sequence $n_j(x)$ such that
\[\left| K_{\rho^{n_{2j}(x)}}f(x)-K_{\rho^{n_{2j-1}(x)}}f(x)\right|>a, \quad \mbox{for all}\ j>0.\]
Thus
\begin{align*}
a b J^{1/s} < & \int_E \Big[\sum_{j=1}^{2J}|K_{\rho^{n_{j}(x)}}f(x)-K_{\rho^{n_{j+1}(x)}}f(x)|^s\Big]^{1/s} dm\\
\le &   \int \sup_{n_j} \Big[\sum_{j=1}^{2J}|K_{\rho^{n_{j}}}f(x)-K_{\rho^{n_{j+1}}}f(x)|^s \Big]^{1/s} dm\\
\le & \Big\| \|K_{\rho^n}f\|_{v(s)} \Big\|^2_2 \le (C+2M)^2 \int (\log\psi)^{+} |t| d \mu_f . \end{align*}
Letting $J\mapsto \infty$,  we obtain a contradiction. 

\noindent {\sc\bf Part (b)}
Consider a sequence of $\rho_k$ decreasing to 1. By part (a), there exists sets $\Omega_k\subset \Omega$ of probability 1 such that for any $\omega\in\Omega_k$,  $K_{\rho_k^n}f$ converges almost everywhere for any $f\in L^2(X)$ with $\int (\log\psi)^+|t| d\mu_f<\infty$. Let $\Omega'=\cap_k \Omega_k$.Then, for any $\omega\in\Omega'$, for $f\ge 0$, a.e.\ convergence of the full sequence $K^{\omega}_mf$ follows because the $T_t$'s are possitive operators and, for $\rho_k^n\le m<\rho_k^{n+1}$, we have 
\[ \left(\frac{b_{\rho_k^{n}}}{b_{\rho_k^{n+1}}}\right)^d  K^{\omega}_{\rho_k^n}f(x) \le K^{\omega}_mf(x)\le \left(\frac{b_{\rho_k^{n+1}}}{b_{\rho_k^{n}}}\right)^d K^{\omega}_{\rho_k^{n+1}}f(x).\]
The general case follows.   
\hfill $\square$ %
\end{proof}

Theorem \ref{thm:1} follows as a corollary of Proposition \ref{prop:estimate.for.D} and \ref{prop:conv}.

\begin{example} 
Let $\{n_k\}_{k\in\mathbb N}$ be an integer valued sequence such that $n_k=O(2^{k^{\beta}})$ ($0<\beta<1$), and such that 
 $\Big\| \| A_{\rho^n}f\|_{v(s)} \Big\|_2 < C \|f\|_2$ ($s>2$).  
 Let $\{\delta_k\}$ be i.i.d. real--valued random variables such that 
 \[ \sum_{k=1}^{\infty} P(\delta_1>2^{k^{\beta}})<\infty.\]  
 Using Proposition \ref{prop:conv} with $\nu_k=\delta_{n_k+\delta_k}$ or $\nu_k=\delta_{\delta_k}$ respectively, 
 there exists a universal set $\Omega'\subset \Omega$ such that, for any $\omega\in\Omega'$, and 
for any probability space $(X,{\cal D},m)$ with any continuous semi--flow of positive isometries $\{T_t\}_{t\in \mathbb (R^+)^d}$ acting on it,
 both averages $G^{\omega}_nf$ and $H^{\omega}_nf$ converge almost everywhere for any $f\in L^2$ such that $\int (\log\log)^{+} |t| d \mu_f<\infty$.
 Similar results holds when we use $\nu_k=\zeta_k*\delta_{n_k+\delta_k}$ where $\zeta_kf=L_{\epsilon_k}f$ defined in  (\ref{eq:Fn}), under mild conditions on the sequence $\{\epsilon_k\}$. However, for these averages we have stronger results.
\end{example}

\subsection{Smoother Averages}

 We now consider the averages $F^{\omega}_nf(x)$ defined in (\ref{eq:Fn}). Their corresponding kernels
\[\hat{F}^{\omega}_n(t)=\frac 1{n^r} \sum_{k\in [1,n]^r} \hat{\zeta}(\epsilon_k.t) e^{i <(n_k+\delta_k),t>}\]  
decompose as $\hat{F}^{\omega}_n(t)=\hat{E}^{\omega}_n(t) + \hat{D}^{\omega}_n(t)$ where 
\[ \hat{E}^{\omega}_n(t)=\varphi(t) \phi(t) \hat{A}_n(t), \quad
\hat{D}^{\omega}_n(t)=\frac 1{n^r} \sum_{k\in [1,n]^r} [e^{i<\delta_k,t>}\hat{\zeta}(\epsilon_k.t) - E(e^{i<\delta_k,t>})E(\hat{\zeta}(\epsilon_k.t))] e^{i<n_k,t>}, \]
 $\varphi(t)=E(e^{i<\delta_k,t>})$, and $\phi(t)=E(\hat{\zeta}(\epsilon_k.t))$.


\begin{prop} Let $\{\delta_k\}_{k\in \mathbb N^r}$ and $\{\epsilon_k\}_{k\in \mathbb N^r}$ be two independent sequence of positive random vectors in $(\mathbb R^+)^d$. Assume that 
\begin{align*}
\mbox{(a)} & \ \{\epsilon_k\}_{k\in \mathbb N^r} \mbox{ are i.i.d. with } E(\min_{1\le j\le d} |\epsilon_{e,j}|^{-\alpha})<\infty, \mbox{  for some }\alpha>0 \mbox{ and }  e\in \mathbb N^r; \\
\mbox{(b)} & \mbox{ for some constant }c\ge 1,\ \sum_{{k\in \mathbb N^r}} P(|\delta_k|>c \ 2^{|k|^{r\beta }})<\infty, \mbox{  for some } 0<\beta<1, \\
\mbox{(c)} & \ \{n_k\}_{k\in \mathbb N^r} \subset {\mathbb R^+}^d, \mbox{ such that } |n_k|=O(2^{|k|^{r\beta}}).
\end{align*}
 Let $\zeta:\mathbb R^d\to\mathbb R$ be positive, integrable with unit integral satisfying 
\[\quad \mbox{(d)} \qquad \sup_t \prod_{j=1}^d \max(1,|t_j|)^{\alpha} | \hat{\zeta}(t)|< \infty.\] 
There exits a positive function  $C(\omega)$, finite P--a.e., such that for 
for any probability space $(X,{\cal D},m)$ with any continuous semi--flow of positive isometries $\{T_t\}_{t\in \mathbb (R^+)^d}$ acting on it, such that if 
$\Big\| \| A_nf \|_{v(s)} \Big\|_2 \le C \|f\|_2$ ($s>2$), then
\begin{enumerate}
\item[a.]  $\Big\| \| F^{\omega}_nf \|_{v(s)} \Big\|_2 \le C(\omega) c(\beta) \|f\|_2, $ for $s>2$, and
\item[b.] $\lim_{n\to \infty} F^{\omega}_nf$ exits $m$--a.e. for all $f\in L^2(X)$.
\end{enumerate}
\end{prop}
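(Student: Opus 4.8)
The plan is to split the kernel as $\hat F^{\omega}_n(t)=\hat E^{\omega}_n(t)+\hat D^{\omega}_n(t)$ with $\hat E^{\omega}_n(t)=\varphi(t)\phi(t)\hat A_n(t)$ as displayed above, and to bound each piece separately in the variational $v(s)$-norm. For the centered part $\hat D^{\omega}_n$, the first step is to realize it as $(1/n^r)\sum_{k}\hat{D\nu}_k^{\omega}(t)e^{i\langle n_k,t\rangle}$ with $\nu_k=\zeta_k*\delta_{n_k+\delta_k}$ (notation of Example~\ref{ex:2}), check that these are independent transition measures with $\||\nu_k|\|_{L^\infty(\Omega)}\le 1$, and apply Proposition~\ref{prop:Main}/Example~\ref{ex:2}: since conditions (b) and (c) give finiteness of $\sum_k\nu_k(|t|>c2^{|k|^{r\beta}})$ (as in Example~\ref{ex:estimate.for.D}), there is $C(\omega)\in L^1(\Omega)$ with $\max_{|t|\le T}|\hat D^{\omega}_n(t)|^2\le C(\omega)\,n^{-r}\log(\max(T,2^{n^{r\beta}}))$. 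That alone is not enough — it controls only the square function along $\{\rho^n\}$, and for the full variational estimate I need to exploit condition (d): the hypothesis $\sup_t\prod_j\max(1,|t_j|)^{\alpha}|\hat\zeta(t)|<\infty$ together with (a) ($E(\min_j|\epsilon_{e,j}|^{-\alpha})<\infty$) forces a \emph{polynomial decay} of $|\hat D^{\omega}_n(t)|$ in $|t|$, so that the contribution of large frequencies is summable without a logarithmic loss. Concretely I would prove a pointwise bound of the shape $|\hat D^{\omega}_n(t)|\le C(\omega)\,n^{-r/2}\,(\log(2+|t|))^{1/2}\prod_j\min(1,|t_j|^{-\alpha})^{?}$ — but actually the cleaner route is to bound the square function $\big\|\sqrt{\sum_n|D_{\rho^n}f|^2}\big\|_2$ for \emph{all} $f\in L^2$ (no spectral-integrability hypothesis) by splitting the spectral integral into $|t|\le\varphi(\rho^n)$ and $|t|>\varphi(\rho^n)$ exactly as in the proof of Theorem~\ref{thm:estimate.for.D.1}, with the smoothing factor $\hat\zeta(\epsilon_k.t)$ supplying enough decay on the tail to replace the $\int(\log\psi)^+|t|\,d\mu_f$ term by $C\|f\|_2^2$.

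Second, I would promote the square-function bound along $\{\rho^n\}$ to a genuine variational inequality $\big\|\|D_{\rho^n}f\|_{v(s)}\big\|_2\le C(\omega)\|f\|_2$ for $s>2$ via Proposition~\ref{prop:properties.var}(2), namely $\|x_n\|_{v(s)}\le 2(\sum_n|x_n|^s)^{1/s}\le 2(\sum_n|x_n|^2)^{1/2}$ since $s>2$. Combined with the assumed variational inequality for the regular averages, $\big\|\|A_nf\|_{v(s)}\big\|_2\le C\|f\|_2$, and the fact that $\hat E^{\omega}_n(t)=\varphi(t)\phi(t)\hat A_n(t)$ is $\hat A_n(t)$ multiplied by a fixed (time-independent) multiplier $\varphi(t)\phi(t)$ of modulus $\le 1$ — so passing to $E^{\omega}_n$ is a bounded operation on $L^2$ commuting with the variation in $n$ — the semi-norm property Proposition~\ref{prop:properties.var}(1) yields $\big\|\|F^{\omega}_nf\|_{v(s)}\big\|_2\le\big\|\|E^{\omega}_nf\|_{v(s)}\big\|_2+\big\|\|D^{\omega}_nf\|_{v(s)}\big\|_2\le (C+2C(\omega))\|f\|_2$, which is part a. The one subtlety here is that the hypothesis is stated for the \emph{full} sequence $A_n$, and I need it (or its consequence) along $\{\rho^n\}$; but $v(s)$-norms along a subsequence are dominated by those along the full sequence, so this is free.

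Third, part b (a.e.\ convergence for \emph{all} $f\in L^2$) follows from part a by the standard Banach-principle/dense-subclass argument: the variational inequality $\big\|\|F^{\omega}_nf\|_{v(s)}\big\|_2\le C(\omega)\|f\|_2$ implies in particular a weak maximal inequality, and on the dense subclass of $f$ with $\int(\log\psi)^+|t|\,d\mu_f<\infty$ — e.g.\ $f$ whose spectral measure is compactly supported, which is dense in $L^2$ — convergence holds by Proposition~\ref{prop:conv}(a) (applicable because conditions (a)--(c) are exactly its hypotheses with $\psi$ the inverse of $\varphi(x)=c\,2^{x^{r\beta}}$, and the needed $v(s)$-bound for $E_{\rho^n}$ is the assumed one for $A_{\rho^n}$ composed with the bounded multiplier). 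Since the variation norm controls oscillation, $\|F^{\omega}_nf\|_{v(s)}<\infty$ a.e.\ already forces the sequence $\{F^{\omega}_nf(x)\}$ to be Cauchy for a.e.\ $x$, so a.e.\ convergence on the dense class plus the maximal inequality gives a.e.\ convergence everywhere in $L^2$; finally, monotonicity in $n$ coming from positivity of the $T_t$ (as in the proof of Proposition~\ref{prop:conv}(b), using that $b_n=n^r$ here so $\sup_n b_{\rho^{n+1}}/b_{\rho^n}\to 1$ as $\rho\to1$) upgrades convergence along $\{\rho^n\}$ to convergence of the full sequence $F^{\omega}_nf$.

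I expect the main obstacle to be the first step: extracting from conditions (a) and (d) the quantitative decay of $\hat D^{\omega}_n(t)$ in $|t|$ that is strong enough to kill the tail of the spectral integral for \emph{every} $f\in L^2$, rather than merely for $f$ with a logarithmic spectral moment. This requires carefully tracking how the random dilation $\epsilon_k$ interacts with the decay of $\hat\zeta$ — one must bound $E(|\hat\zeta(\epsilon_k.t)|)$ and, more delicately, the $L^\infty(\Omega)$-norms $\||\nu_k|\|$ and the truncated exponential sums in Theorem~\ref{thm:Main} uniformly in $t$ with the gain $\prod_j\min(1,|t_j|)^{-\alpha}$, and then check that $\sum_n n^{-r}\int(\log(2+|t|))\prod_j\min(1,|t_j|^{-\alpha})\,d\mu_f(t)\lesssim\|f\|_2^2$ uniformly. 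Everything after that is a routine assembly of the variation-norm inequalities and the Banach principle.
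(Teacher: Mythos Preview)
Your decomposition $\hat F^{\omega}_n=\hat E^{\omega}_n+\hat D^{\omega}_n$ and your treatment of the $E$--part are essentially what the paper does (the paper writes $E_nf=\int E(\zeta_{\epsilon_e}(t))\,A_n(E(f\circ T_{\delta_e+t}))\,dt$ and pulls the $v(s)$--norm inside the integral; your spectral--multiplier phrasing is equivalent). Your tail estimate for $\hat D^{\omega}_n$ is also the right one: from (a) and (d) one gets $\sup_n|\hat D^{\omega}_n(t)|\le C_2(\omega)/|t|^{\alpha}$ a.s., and this is exactly how the paper disposes of the region $|t|>2^{2^{\beta k}}$.

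The genuine gap is in your second step. A square--function bound along the exponential sequence $\{\rho^n\}$ gives you only $\big\|\,\|D^{\omega}_{\rho^n}f\|_{v(s)}\big\|_2\le C(\omega)\|f\|_2$, i.e.\ the variation \emph{restricted to} the subsequence. Part (a) of the proposition asserts the variational inequality for the \emph{full} sequence $\{F^{\omega}_n\}_{n\ge 1}$, and there is no cheap way to pass from the former to the latter for the centered piece $D^{\omega}_n$: the sandwich argument of Proposition~\ref{prop:conv}(b) uses positivity and only yields a.e.\ convergence, not a variation bound. Your final paragraph silently upgrades $\|D^{\omega}_{\rho^n}f\|_{v(s)}$ to $\|D^{\omega}_{n}f\|_{v(s)}$ without justification, and this is where the argument breaks.

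The paper closes this gap with two extra ingredients you are missing. First, it works not along $\{\rho^n\}$ but along a much denser deterministic subsequence $\{m_l\}$: inside each dyadic block $[2^k,2^{k+1})$ one places $\sim k^2$ equally spaced points (spacing $\sim 2^k/k^2$). The same low/high frequency split you describe gives $\sum_l\|D^{\omega}_{m_l}f\|_2^2\le C(\omega)\|f\|_2^2$, because the low--frequency contribution is $\sum_k k^2\,2^{-rk(1-\beta)}<\infty$ and the high--frequency contribution is $\sum_k k^2\,2^{-\alpha 2^{\beta k}}<\infty$. Second, to bridge between consecutive $m_l$'s the paper uses the trivial but crucial bound $|\hat D^{\omega}_n(t)-\hat D^{\omega}_{n+1}(t)|\lesssim 1/n$ (one term added to an average of $n^r$ bounded terms). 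Setting $\tilde D^{\omega}_n=D^{\omega}_{m_l}$ for $m_l\le n<m_{l+1}$ and invoking Proposition~\ref{prop:properties.var}(3), one controls $\|D^{\omega}_n f-\tilde D^{\omega}_n f\|_{v(s)}$ by the $v(1)$--variation on each short block, then Cauchy--Schwarz with $m_{l+1}-m_l\sim 2^k/k^2$ gives
\[
\big\|\,\|D^{\omega}_nf-\tilde D^{\omega}_nf\|_{v(s)}\big\|_2^2
\lesssim \sum_k \frac{2^k}{k^2}\sum_{n=2^k}^{2^{k+1}-1}\frac{1}{n^2}\,\|f\|_2^2
\sim \sum_k \frac{1}{k^2}\,\|f\|_2^2<\infty.
\]
This short--range/long--range splitting is the missing idea; once you have it, part (b) is immediate because a finite $v(s)$--norm already forces the sequence to be Cauchy a.e.
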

\begin{proof}
Fix $e\in \mathbb N^r$. Since 
\[E_nf=E(F^{\omega}_nf)=\int E(\zeta_{\epsilon_e}(t)) A_n(E(f \circ T_{\delta_e+t}))dt\]
their variational inequality ensues,
\[  \Big\| \| E_nf \|_{v(s)} \Big\|_2 \le \int E(\zeta_{\epsilon_e}(t)) \, \Big\| \| A_n f \circ T_{\delta_e+t}\|_{v(s)} \Big\|_2 dt \le 
C  \|f\|_2. \]
It remains to prove the variational inequality for 
$\{D^{\omega}_nf=F^{\omega}_nf-E_nf\}$. \\
First we obtain a variational inequality along a well chosen sub sequence.
For $k\ge 1$, consider $\{a_j\}_{j\in I_k}$ to be the sequence of equally spaced integers such that $2^k\le a_j <2^{k+1}$, $a_{j+1}-a_j \sim 2^k/k^2$, and the smallest element in this sequence is $2^k$. 
Rename this sequence $\{m_l\}=\cup_k \cup_{j\in I_k} a_j$.
\begin{align*}
  \Big\| \| D^{\omega}_{m_l} f \|_{v(s)} \Big\|^2_2    & \le \Big\| \| D^{\omega}_{m_l} f \|_{v(2)} \Big\|^2_2 \le 2 \sum_{l\ge 1} \| D^{\omega}_{m_l} f \|^2_2\\   
& \le 2 \sum_{k\ge 1} \sum_{j\in I_k}  \int_{|t|\le 2^{2^{\beta k}}}  |\hat{D}^{\omega}_{a_j}(t)|^2 \, d\mu_f 
  + 2 \sum_{k\ge 1} \sum_{j\in I_k}  \int_{|t|> 2^{2^{\beta k}}}  |\hat{D}^{\omega}_{a_j}(t)|^2 \, d\mu_f \\
  &= \mbox{I} +  \mbox{II}.
 \end{align*}
 
 The estimate for first term $I$ follows the same argument as Proposition \ref{prop:estimate.for.D}. By Proposition \ref{prop:Main} there exists $C(\omega)$, a positive integrable function on $\Omega$, such that, for $2^k\le a_j<2^{k+1}$,
\[   \max_{|t| \le 2^{2^{\beta k}}  }  |\hat{D}^{\omega}_{a_j}(t)|^2  \lesssim C(\omega) \frac{   \log( 2^{ 2^{r\beta k} }  )}{2^{rk}} \lesssim C(\omega)  \frac{1}{2^{rk(1-\beta)}}.\]
Since $|I_k| \sim k^2$ and $0<\beta<1$,
\begin{align*}
\mbox{I} & \lesssim C(\omega)  \sum_{k=1}^{\infty} k^2  \frac 1{2^{rk\beta}} \int_{|t| \le 2^{2^{\beta k}}  } d\mu_f\\
 & = C(\omega) c(\beta)  \|f\|^2_2.
 \end{align*}
 For the second term, the assumptions on $\zeta$ imply
 \[ |\hat{D}^{\omega}_n(t)| \lesssim \frac 1{ |t|^{\alpha}} \Big[ \frac 1{n^r} \sum_{l\in [1,n]^r} \frac 1{\min_{1\le j\le d} |\epsilon_{l,j}(\omega)|^{\alpha}} +E(\frac 1{\min_{1\le j\le d} |\epsilon_{l,j}|^{\alpha}})\Big].\]
 By assumption (a), there exist $\Omega''\subset\Omega$ of probability 1 such that, for all $\omega\in\Omega''$, $\sup_n  |\hat{D}^{\omega}_n(t)|\le \frac{C_2(\omega)}{ |t|^{\alpha}}$, with $C_2(\omega)<\infty$ P--a.e.. 
 \begin{align*}
\mbox{ II} & = \sum_k \sum_{j\in I_k} \int_{|t|>2^{2^{\beta k}}  } |\hat{D}^{\omega}_{a_j}(t)|^2 d \mu_f \\
 & \le C^2_2(\omega) \sum_k k^2  \frac 1{2^{\alpha 2^{ \beta k}}} \int_{|t|>2^{2^{\beta k}}  } d \mu_f \\
  & \le C^2_2(\omega)c(\beta) \|f\|^2_2. \\
    \end{align*} 
Now let $\tilde{D}^{\omega}_n=D^{\omega}_{m_k}$ if $m_k\le n <m_{k+1}$. Then 
 $ \| D^{\omega}_nf\|_{v(s)} =  \| D^{\omega}_nf-\tilde{D}^{\omega}_nf\|_{v(s)}  + \| \tilde{D}^{\omega}_nf\|_{v(s)}=\mbox{A+B}.$ 
 The second term $B$ is the variation along the subsequence $\{m_l\}$ handled above. For the first term, by Proposition \ref{prop:properties.var},
 \begin{align*}
 \| D^{\omega}_nf-\tilde{D}^{\omega}_nf\|_{v(s)} & \le 2 \Big(\sum_l \| \{ D^{\omega}_nf: m_l\le n < m_{l+1} \} \|^s_{v(s)}\Big)^{1/s} \\
 &  \le  2\Big( \sum_l \| \{ D^{\omega}_nf: m_l\le n < m_{l+1} \} \|^2_{v(1)} \Big)^{1/2};
 \end{align*}
 and for $m_l\in I_k$,
 \begin{align*}
 \| \{ D^{\omega}_nf: m_l \le n < m_{l+1} \} \|_{v(1)} & \le  \sum_{n=m_l}^{m_{l+1}-1} |D^{\omega}_nf-D^{\omega}_{n+1}f| \\
  & \le  (m_{l+1}-m_l)^{1/2} \big( \sum_{n=m_l}^{m_{l+1}-1} |D^{\omega}_nf-D^{\omega}_{n+1}f|^2 \big)^{1/2}\\
  & \le  \big( \frac{2^k}{k^2} \big)^{1/2} \big( \sum_{n=m_l}^{m_{l+1}-1} |D^{\omega}_nf-D^{\omega}_{n+1}f|^2 \big)^{1/2}.
 \end{align*}
 Since $\hat{D}^{\omega}_n$ are bounded, $|\hat{D}^{\omega}_n(t)-\hat{D}^{\omega}_{n+1}(t)|\sim \frac Cn $, thus
 \begin{align*}
 \|  \| D^{\omega}_nf-\tilde{D}^{\omega}_nf\|_{v(s)} \|^2_2 & \le 4 \sum_k \frac{2^k}{k^2} \sum_{j\in I_k} \sum_{n=a_j}^{a_{j+1}-1} \|D^{\omega}_nf-D^{\omega}_{n+1}f\|^2_2 \\
 & \le 4 \sum_k \frac{2^k}{k^2}   \sum_{n=2^k}^{2^{k+1}-1} 
\|D^{\omega}_nf-D^{\omega}_{n+1}f\|^2_2\\ 
  & \le 4 \sum_k \frac{2^k}{k^2}   \sum_{n=2^k}^{2^{k+1}-1} 
	\int |\hat{D}^{\omega}_n(t)-\hat{D}^{\omega}_{n+1}(t)|^2 d\mu_f \\ 
 & \sim  \sum_k \frac{2^k}{k^2} \sum_{n=2^k}^{2^{k+1}-1} \frac 1{n^2} \int  d\mu_f \\
 & \sim  \sum_k \frac 1{k^2}  \|f\|^2_2  = C \|f\|^2_2.
 \end{align*} 
 
 Now that the variation inequality is established, pointwise convergence for $f\in L^2(X)$ follows by an argument similar to Proposition \ref{prop:conv}. \hfill $\square$ 
 \end{proof}
 
The variational inequality of $\{F_nf\}$ for $f$ in $L^2$ was possible due to the decay of the kernel $\hat{F}_n(t)$. Such decay also yields the a.e. convergence of the related series. This result requires Moricz's \cite{Moricz} estimates for moments of sums of random variables and its extension in Cohen and Cuny \cite{Cohen.Cuny3}.

 \begin{prop} \cite{Cohen.Cuny3} \label{prop:Mor} Let $(Y,\cal C,\mu)$ a probaility space and $\{G_n\}\subset L^2(Y)$. Let $\{\alpha_n\}$ a sequence of non--negative numbers, $\gamma>0$, $C>0$ constants, and $\{A_n\}$ a non-decreasing sequence with growth condition $A_n\lesssim n^{\gamma}$ such that, for all $m>n\ge 1$, 
 \[\left \| \left | \sum_{k=n+1}^m G_k \right |^2  \right \|_2^2 \le A_m \sum_{k=n+1}^m\alpha_k.\]
 If $\sum_{n\ge 1} \alpha_n A_n (\log n)^2 <\infty$, then
 $\sum_{n\ge 1}  G_n$ converges a.e. in $L^2(Y)$, and
 \[ \left \| \sup_{n\ge 1}  \left | \sum_{k=1}^n G_k \right | \right \|_2^2 \le C(\gamma) \sum_{n\ge 1} \alpha_n A_n (\log n)^2.\]
  \end{prop}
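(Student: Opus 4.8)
The statement is a quantitative Rademacher--Menshov / Moricz-type maximal inequality for the partial sums $S_n:=\sum_{k=1}^n G_k$; write $S_{a,b}:=\sum_{a<k\le b}G_k$, so $S_n=S_{0,n}$. The plan is to run the dyadic chaining scheme of Moricz \cite{Moricz} in the form sharpened by Cohen and Cuny \cite{Cohen.Cuny3}. The structural observation driving everything is that the majorant $g(a,b):=A_b\sum_{a<k\le b}\alpha_k$ of $\||S_{a,b}|^2\|_2^2$ is super-additive: for $a<c<b$, since $A$ is non-decreasing, $g(a,c)+g(c,b)\le A_b(\sum_{a<k\le c}\alpha_k+\sum_{c<k\le b}\alpha_k)=g(a,b)$. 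In particular, on any interval contained in a dyadic block $B_i:=\{k:2^i\le k<2^{i+1}\}$ we have $g(a,b)\le\sum_{a<k\le b}\widetilde\alpha_k^{(i)}$ with the \emph{additive} weights $\widetilde\alpha_k^{(i)}:=A_{2^{i+1}}\alpha_k$, which places the restriction of $(G_k)$ to $B_i$ in the classical additive Menshov--Rademacher setting.

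First I would decompose the maximal function along the dyadic grid. For each $n$, with $2^{i(n)}\le n<2^{i(n)+1}$, one has $|S_n|\le|S_{2^{i(n)}}|+\max_{m\in B_{i(n)}}|S_m-S_{2^{i(n)}}|$, hence $\sup_n|S_n|\le\sup_i|S_{2^i}|+\sup_i\max_{m\in B_i}|S_m-S_{2^i}|$ and therefore $\|\sup_n|S_n|\|_2^2\lesssim\|\sup_i|S_{2^i}|\|_2^2+\sum_i\|\max_{m\in B_i}|S_m-S_{2^i}|\|_2^2$. For the block terms I would note that on $B_i$ the hypothesis has become additive with weights $\widetilde\alpha_k^{(i)}$, so the fourth-moment form of the Menshov--Rademacher maximal inequality used in \cite{Cohen.Cuny3} (a refinement of \cite{Moricz}), applied over the $2^i$ indices of $B_i$ and then read in $L^2$, gives $\|\max_{m\in B_i}|S_m-S_{2^i}|\|_2^2\lesssim i^2\sum_{k\in B_i}\widetilde\alpha_k^{(i)}=i^2A_{2^{i+1}}\sum_{k\in B_i}\alpha_k$. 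Since $i^2\asymp(\log k)^2$ for $k\in B_i$, and by monotonicity together with $A_n\lesssim n^\gamma$ the factor $A_{2^{i+1}}$ is comparable to $A_k$ on $B_i$ up to a constant depending only on $\gamma$, summing in $i$ gives $\sum_i\|\max_{m\in B_i}|S_m-S_{2^i}|\|_2^2\lesssim\sum_k\alpha_kA_k(\log k)^2$.

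For the sparse subsequence $\{S_{2^i}\}$ I would set $H_i:=S_{2^i,2^{i+1}}$ and note $\||\sum_{p<i\le q}H_i|^2\|_2^2=\||S_{2^p,2^q}|^2\|_2^2\le A_{2^q}\sum_{2^p<k\le2^q}\alpha_k$, which is again super-additive in $(p,q)$; applying the same argument once more to the exponentially sparser sequence $(H_i)$ yields $\|\sup_i|S_{2^i}|\|_2^2\lesssim\sum_i(\log i)^2A_{2^{i+1}}\sum_{k\in B_i}\alpha_k\lesssim\sum_k\alpha_kA_k(\log k)^2$. Each pass replaces the index count by its logarithm, so the recursion stops after finitely many rounds and the accumulated constants collapse into a single $C(\gamma)$, the dependence on $\gamma$ entering through how $\log A_{2^i}$ is controlled by $i$. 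Almost everywhere convergence then follows by truncation: for any $M$ the shifted sequence $(G_{M+k})_k$ obeys the same hypothesis with $\alpha$ and $A$ re-indexed, so the maximal inequality gives $\|\sup_{n>M}|S_n-S_M|\|_2^2\lesssim\sum_{k>M}\alpha_kA_k(\log k)^2\to0$ as $M\to\infty$; hence $(S_n)$ is almost everywhere Cauchy and $\sum_n G_n$ converges a.e., while taking $M=0$ recovers the displayed inequality.

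The step I expect to be the main obstacle is the bookkeeping in the block estimates, whose point is to produce the logarithmic loss in the sharp localized form $\sum_k\alpha_kA_k(\log k)^2$ rather than the crude global bound $(\log N)^2\sum_{k\le N}\alpha_kA_k$. This is what forces the dyadic-block reduction: the majorant $A_b\sum\alpha_k$ has its $A_b$-factor evaluated at the right endpoint and is therefore not additive, but becomes additive block by block once $A_b$ is frozen to the constant $A_{2^{i+1}}$; and it is what forces routing the argument through the fourth-moment Menshov--Rademacher inequality before descending to $L^2$. Comparing $A_{2^{i+1}}$ with the pointwise values $A_k$ on $B_i$, and keeping the logarithms of the $A$-factors linear in the block index, is precisely where the polynomial growth hypothesis $A_n\lesssim n^\gamma$, and the resulting dependence of the constant on $\gamma$, enter.
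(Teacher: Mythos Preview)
The paper does not supply a proof of this proposition: it is quoted from Cohen and Cuny \cite{Cohen.Cuny3} (extending M\'oricz \cite{Moricz}) and immediately used as a black box in the last proposition of the paper. Your sketch is therefore a reconstruction of the argument the citation points to, and the architecture you describe --- super-additivity of $g(a,b)=A_b\sum_{a<k\le b}\alpha_k$, dyadic blocking, M\'oricz/Rademacher--Menshov on each block, then handling the sparse subsequence $(S_{2^i})$ --- is the standard and intended route. (Incidentally, the hypothesis as actually verified in the paper's application is the $L^2$ bound $\|\sum_{n<k\le m}G_k\|_2^2\le A_m\sum_{n<k\le m}\alpha_k$; the extra $|\cdot|^2$ inside the norm in the displayed statement appears to be a typo, so your ``fourth-moment'' detour is unnecessary.)

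One step does not close as written. Your claim that ``by monotonicity together with $A_n\lesssim n^{\gamma}$ the factor $A_{2^{i+1}}$ is comparable to $A_k$ on $B_i$ up to a constant depending only on $\gamma$'' is not justified by the stated hypotheses: monotonicity gives only $A_k\le A_{2^{i+1}}$, which is the wrong direction, and the upper bound $A_n\lesssim n^{\gamma}$ gives no lower bound on $A_k$ (take $A_n=1$ for $n<N_0$ and $A_n=n^{\gamma}$ thereafter; then $A_{2^{i+1}}/A_k$ is unbounded on the block straddling $N_0$). What your block estimate actually delivers, after using $A_{2^{i+1}}\lesssim 2^{\gamma}k^{\gamma}$ for $k\in B_i$, is $C(\gamma)\sum_k\alpha_k\,k^{\gamma}(\log k)^2$; this is exactly what the paper needs in its only application, where $A_m=m^{r\beta}$ so $A_k$ and $k^{\gamma}$ coincide. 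To obtain the conclusion literally as displayed with $A_k$ rather than $k^{\gamma}$ one needs a doubling condition on $A$, or else the precise formulation in \cite{Cohen.Cuny3}. Finally, the recursion you invoke for $(S_{2^i})$ is more than necessary and its termination is left vague: a single Cauchy--Schwarz step, $\|\sup_i|S_{2^i}|\|_2\le\sum_j\|H_j\|_2\le(\sum_j j^{-2})^{1/2}(\sum_j j^{2}\|H_j\|_2^2)^{1/2}$, already returns the same block sum $\sum_j j^{2}A_{2^{j+1}}\sum_{k\in B_j}\alpha_k$.
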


 \begin{prop} Let $\{\delta_k\}$ and $\{\epsilon_k\}$ be two independent sequence of i.i.d positive random variables with $E(\min_{1\le j \le d} \epsilon_{e,j}^{-\alpha})<\infty$, for some $\alpha>0$, $e \in \mathbb N^r$. Let $\zeta$ be positive, integrable with unit integral satisfying 
$\sup_t \prod_{1\le j\le d} \max (1,|t_j|^{\alpha}) | \hat{\zeta}(t)|<\infty $. 
Let $\{n_k\}_{k\in \mathbb N^r} \subset (\mathbb R^+)^d$ 
such that $n_k=O(2^{|k|^{r\beta}})$ for some $0<\beta<1$, then there exits a universal set $\Omega'\subset \Omega$ of probability 1 such that for any $\omega\in \Omega'$, for any probability space $(X,{\cal D},m)$ with any continuous semi--flow of isometries $\{T_t\}_{t\in \mathbb (R^+)^d}$ acting on it, the partial sums
\[ \sum_{k\in [1,n]^r} \frac{T_{n_k+\delta_k}L_{\epsilon_k(\omega)}f(x)-T_{n_k}E(T_{\delta_k}L_{\epsilon_k}f(x))}{|k|^r} \mbox{ converges for a.e. } x.\]
Moreover, there is a function  $C:\Omega\to \mathbb R^+$ finite P--a.e. such that 
\[ \Big \| \sum_{k\in \mathbb N^r} \frac{T_{n_k+\delta_k}L_{\epsilon_k(\omega)}f(x)-T_{n_k}E(T_{\delta_k}L_{\epsilon_k}f(x))}{|k|^r}\Big \|^2<C(w) \|f\|_2^2. \]
 \end{prop}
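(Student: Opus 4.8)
The plan is to collapse the $r$--parameter series to a single--index one, establish a dyadic block $L^2$ estimate of exactly the shape demanded by Proposition~\ref{prop:Mor} (the Moricz/Cohen--Cuny lemma), and read off from it both the a.e.\ convergence and the maximal $L^2$ bound. First, write $\nu_k=\zeta_{\epsilon_k}*\delta_{n_k+\delta_k}$, so that $\nu_k^{\omega}f=T_{n_k+\delta_k}L_{\epsilon_k}f$ and, since $n_k$ is deterministic, $T_{n_k}E(T_{\delta_k}L_{\epsilon_k})f=E\nu_kf$; the $\{\nu_k\}$ are independent probability transition measures with $\hat{\nu}^{\omega}_k(t)-\hat{E\nu}_k(t)=e^{i<n_k,t>}\big[e^{i<\delta_k,t>}\hat{\zeta}(\epsilon_k.t)-E(e^{i<\delta_k,t>})E(\hat{\zeta}(\epsilon_k.t))\big]$. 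Grouping by $l=|k|$ (so that $[1,n]^r=\{k:|k|\le n\}$), set $H_l^{\omega}f=l^{-r}\sum_{|k|=l}(\nu_k^{\omega}-E\nu_k)f$; then the partial sums of the series are exactly $\mathcal S_n:=\sum_{l=1}^n H_l^{\omega}f=\sum_{k\in[1,n]^r}|k|^{-r}(\nu_k^{\omega}-E\nu_k)f$. By the spectral representation and unitary dilation of Note~\ref{representation}, for any semi--flow of isometries on any $L^2(X)$ and any $f\in L^2(X)$,
\[\Big\|\sum_{n<l\le m}H_l^{\omega}f\Big\|_{2}^2\le\int_{\mathbb R^d}\Big|\sum_{k\in N_{n,m}}\frac{\hat{\nu}^{\omega}_k(t)-\hat{E\nu}_k(t)}{|k|^r}\Big|^2 d\mu_f(t),\qquad N_{n,m}=\{k:n<|k|\le m\},\]
so everything reduces to bounding the right--hand side by a quantity of the form $A_m\sum_{n<l\le m}\alpha_l$ with $A_l\lesssim l^{\gamma}$ and $\sum_l\alpha_lA_l(\log l)^2<\infty$.

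The block bound I would obtain by splitting the spectral integral at $|t|=\varphi(m)$, where $\varphi(x)=c\,2^{|x|^{r\beta}}$ with $c\ge1$ large. On $\{|t|\le\varphi(m)\}$ I apply the Salem--Zygmund/Cohen estimate (\ref{eq:main2}) of Proposition~\ref{prop:Main} with $a_k=|k|^{-r}$; its hypothesis $\sum_k E\nu_k(|t|>\varphi(|k|))<\infty$ follows from (b), (c) exactly as in Example~\ref{ex:estimate.for.D} (cf.\ Example~\ref{ex:2}), and taking $T=\varphi(m)$ gives, on a full--measure set where the constant $C(\omega)$ is finite,
\[\max_{|t|\le\varphi(m)}\Big|\sum_{k\in N_{n,m}}\frac{\hat{\nu}^{\omega}_k(t)-\hat{E\nu}_k(t)}{|k|^r}\Big|^2\le c(d)\,C(\omega)\Big[\sum_{k\in N_{n,m}}\frac1{|k|^{2r}}\Big]\log\varphi(m)\lesssim C(\omega)\,m^{r\beta}\sum_{n<l\le m}\frac1{l^{r+1}},\]
using $\#\{k:|k|=l\}\sim r\,l^{r-1}$. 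On $\{|t|>\varphi(m)\}$ I use the smoothing: condition (d) gives $|\hat{\zeta}(\epsilon_k.t)|\le C\prod_j\max(1,|\epsilon_{k,j}t_j|)^{-\alpha}\le C(\min_j\epsilon_{k,j})^{-\alpha}|t|^{-\alpha}$, so the whole sum is at most $C_0|t|^{-\alpha}W_{1,m}(\omega)$ with $W_{1,m}(\omega)=\sum_{k\in[1,m]^r}|k|^{-r}\big[(\min_j\epsilon_{k,j}(\omega))^{-\alpha}+E(\min_j\epsilon_{e,j})^{-\alpha}\big]$, and by (a) and the strong law of large numbers $W_{1,m}(\omega)=O(m^r)$ on a full--measure set; since $|t|^{-2\alpha}\le(c\,2^{m^{r\beta}})^{-2\alpha}$ there, the tail contributes $\lesssim C(\omega)\,m^{2r}2^{-2\alpha m^{r\beta}}\|f\|_2^2$. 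Putting the two regimes together, on the intersection $\Omega'$ of these full--measure sets,
\[\Big\|\sum_{n<l\le m}H_l^{\omega}f\Big\|_{2}^2\le A_m\sum_{n<l\le m}\alpha_l,\qquad A_l=c(d,r)\,l^{r\beta},\quad \alpha_l=C(\omega)\|f\|_2^2\Big(\frac{c(r)}{l^{r+1}}+l^{2r}2^{-2\alpha l^{r\beta}}\Big).\]

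Then I would invoke Proposition~\ref{prop:Mor} with $\gamma=r\beta$: we have $A_l\lesssim l^{r\beta}$, and, because $r(1-\beta)>0$,
\[\sum_{l\ge1}\alpha_l A_l(\log l)^2=c(d,r)C(\omega)\|f\|_2^2\sum_{l\ge1}\Big(\frac{c(r)(\log l)^2}{l^{1+r(1-\beta)}}+l^{2r+r\beta}(\log l)^2\, 2^{-2\alpha l^{r\beta}}\Big)<\infty.\]
Hence $\sum_l H_l^{\omega}f$ converges $m$--a.e.\ in $L^2(X)$, i.e.\ $\mathcal S_n$ converges $m$--a.e., which is the claimed a.e.\ convergence of the partial sums; and $\|\sup_n|\mathcal S_n|\|_2^2\le C(r\beta)\sum_l\alpha_l A_l(\log l)^2\le C(\omega)\|f\|_2^2$, whence $\|\lim_n\mathcal S_n\|_2^2\le C(\omega)\|f\|_2^2$, which is the asserted bound on $\|\sum_{k\in\mathbb N^r}|k|^{-r}(T_{n_k+\delta_k}L_{\epsilon_k(\omega)}f-T_{n_k}E(T_{\delta_k}L_{\epsilon_k})f)\|_2^2$. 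The universal set $\Omega'$ is fixed before $(X,{\cal D},m)$ and $\{T_t\}$, since both the kernel estimate of Proposition~\ref{prop:Main} and the strong law involve only $\{n_k\},\{\delta_k\},\{\epsilon_k\},\zeta$.

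The main obstacle is forcing the block bound into precisely the form Proposition~\ref{prop:Mor} requires. The low--frequency part makes it necessary to keep the genuine block sum $\sum_{k\in N_{n,m}}|k|^{-2r}\sim\sum_{n<l\le m}l^{-(r+1)}$ rather than its crude bound $\sim n^{-r}$, so that it telescopes correctly against $\sum_{n<l\le m}\alpha_l$ (i.e.\ one must track the dependence on both endpoints $n,m$, not just $n$). The high--frequency part must be controlled for \emph{all} $f\in L^2$, which is possible only because (d) yields honest power decay of the centered kernels in $|t|$ --- this is exactly where the smoothing enters and why, unlike Theorem~\ref{thm:estimate.for.D.1}, no $\log\log$ moment on $\mu_f$ is needed --- but it has to be married to the a.e.\ growth estimate $W_{1,m}(\omega)=O(m^r)$ coming from only a first moment of $(\min_j\epsilon_{e,j})^{-\alpha}$. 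Everything else is the transfer/spectral bookkeeping already used in the proofs of Theorem~\ref{thm:estimate.for.D.1} and Proposition~\ref{prop:estimate.for.D}.
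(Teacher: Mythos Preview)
Your proposal is correct and follows essentially the same route as the paper: spectral reduction via Note~\ref{representation}, splitting the $\mu_f$--integral at $|t|=2^{m^{r\beta}}$, invoking Proposition~\ref{prop:Main} with $a_k=|k|^{-r}$ on the low--frequency piece, using the decay of $\hat\zeta$ together with the SLLN for $(\min_j\epsilon_{k,j})^{-\alpha}$ on the high--frequency piece, and then feeding the resulting block bound into Proposition~\ref{prop:Mor}. The only cosmetic differences are that you make the collapse to a single index $l=|k|$ explicit and keep the exponentially small tail as a separate summand in $\alpha_l$, whereas the paper absorbs it directly into $C(\omega)\,m^{r\beta}\sum_{n<k\le m}k^{-(r+1)}$; both are legitimate and lead to the same application of Proposition~\ref{prop:Mor} with $\gamma=r\beta$.
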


\begin{proof}
$I_{n,m}=\{k\in \mathbb N^r: n<|k| \le m\}$. 
\[
 \left \| \sum_{k\in I_{n,m}}  \frac{T_{n_k+\delta_k}L_{\epsilon_k(\omega)}f(x)-T_{n_k}E(T_{\delta_k}L_{\epsilon_k}f(x))}{|k|^r}\right \|_2^2 \qquad \]
\[ \qquad \le 
  \int \left | \sum_{k\in I_{n,m}} \frac{[e^{i<\delta_k,t>}\hat{\zeta}(\epsilon_k t) - E(e^{i<\delta_k,t>})E(\hat{\zeta}(\epsilon_k t))] e^{in_kt} }{|k|^r} \right |^2 \ d\mu_f.
  \]
By Proposition \ref{prop:Main}, there exists $C_1(\omega)>0$ in $L^1(\Omega)$ such that, for all $|t|\le 2^{m^{r\beta}} $,
\begin{align*}
\left | \sum_{k\in I_{n,m}} \frac{[e^{i<\delta_k,t>}\hat{\zeta}(\epsilon_k.t) - E(e^{i<\delta_k,t>})E(\hat{\zeta}(\epsilon_k.t))] e^{in_kt} }{|k|^r} \right |^2  &
<  C_1(\omega) \sum_{j=n+1}^m \frac 1{j^{r+1}}  \log( 2^{m^{r\beta}}) \\
& \sim C_1(\omega) m^{r\beta} \left (\sum_{j=n+1}^m \frac 1{j^{r+1}}\right ). 
\end{align*}
On the other hand, for $|t|> 2^{m^{r\beta}}$,
\[
 \left | \sum_{k\in I_{n,m}} \frac{[e^{i<\delta_k,t>}\hat{\zeta}(\epsilon_k . t) - E(e^{i<\delta_k,t>})E(\hat{\zeta}(\epsilon_k . t))] e^{i<n_k,t>} } {|k|^r} \right |^2 
\qquad \qquad \]
\begin{align*}
\lesssim 
& \frac{1}{|t|^{2\alpha}} \left [ \sum_{k\in I_{n,m}} \frac 1{|k|^r} \left[
\frac 1{ \min_{1\le j\le d}\epsilon_{k,j}^{\alpha}}+  E(\frac 1{ \min_{1\le j\le d}\epsilon_{k,j}^{\alpha}})\right]  \right ]^2  \\
\le & \frac {m^{2r}}{2^{2\alpha m^{r\beta}}} \left [\frac 1{m^r} \sum_{k\in I_{n,m}} \left[ \frac 1{ \min_{1\le j\le d}\epsilon_{k,j}^{\alpha}} +  E(\frac 1{ \min_{1\le j\le d}\epsilon_{k,j}^{\alpha}})\right] \right ]^2.
\end{align*}

Since $\min_{1\le j\le d}\epsilon_{k,j}^{-\alpha} \in L^1(\Omega)$, there exists $C_2(\omega)>0$ finite P--a.e. such that 
\[ \left | \sum_{k\in I_{n,m}} \frac{[e^{i<\delta_k,t>}\hat{\zeta}(\epsilon_k.t) - E(e^{i<\delta_k,t>})E(\hat{\zeta}(\epsilon_k.t))] e^{i<n_k,t>} }{|k|^r} \right |^2 
\lesssim C^2_2(\omega)  \frac {m^{2r}}{2^{2\alpha m^{r\beta}}}.\]
Combining both estimates, there is $C(\omega)>0$ finite P-a.e. such that
\[ \left \| \sum_{k\in I_{n,m}}  \frac{L_{\epsilon_k}f(T_{n_k+\delta_k}x)-E(L_{\epsilon_k}f(T_{n_k+\delta_k}x))}{|k|^r}\right \|_2^2 \lesssim
C(\omega) m^{r\beta} \left (\sum_{k=n+1}^m \frac 1{k^{r+1}} \right) \|f\|_2^2.
 \]
The result follows by Proposition \ref{prop:Mor}.

\hfill $\square$ 
\end{proof}

\end{document}